\documentclass[11pt]{amsart}
\usepackage{hyperref,amsmath,a4wide,graphicx,wasysym,amssymb,mathtools}
\numberwithin{equation}{section}
\usepackage[all]{xy}
\usepackage{tikz}
\usetikzlibrary{arrows,calc,shapes,decorations.pathreplacing,cd}

\newtheorem{lemma}[equation]{Lemma}
\newtheorem{proposition}[equation]{Proposition}

\theoremstyle{definition}
\newtheorem{definition}[equation]{Definition}
\newtheorem{example}[equation]{Example}

\theoremstyle{remark}
\newtheorem*{remark}{Remark}
\newcommand{\mb}[1]{{\mathbf #1}}
\newcommand{\mc}[1]{{\mathcal #1}}

\bibliographystyle{alpha}

\title{Inessential directed maps and directed homotopy equivalences}
\author{Martin Raussen} 
\address{Department of
  Mathematical Sciences, Aalborg University, Skjernvej 4A,
  DK-9220 Aalborg {\O}st, Denmark} 
\email{raussen@math.aau.dk}
\urladdr{http://people.math.aau.dk/~raussen/} 
\thanks{The author thanks the Hausdorff Research Institute for Mathematics in Bonn, Germany, for its hospitality during two visits as part of the programme Applied and Computational Algebraic Topology in 2017 that allowed him to begin thinking about and discussing the topics dealt with in this paper.\\
He also wishes to thank for a helpful referee report.} 
\begin{document}

\begin{abstract}
A directed space is a topological space $X$ together with a subspace $\vec{P}(X)\subset X^I$ of \emph{directed} paths on $X$. A symmetry of a directed space should therefore respect both the topology of the underlying space and the topology of the associated spaces $\vec{P}(X)_-^+$ of directed paths between a source ($-$) and a target ($+$) - up to homotopy. If it is, moreover, homotopic to the identity map -- in a directed sense -- such a symmetry will be called an inessential d-map, and the paper explores the algebra and topology of inessential d-maps. Comparing two d-spaces $X$ and $Y$ ``up to symmetry'' yields the notion of a directed homotopy equivalence between them. Under appropriate conditions, all directed homotopy equivalences are shown to satisfy a 2-out-of-3 property. Our notion of directed homotopy equivalence does not agree completely with the one defined in \cite{Goubault:17} and \cite{GFS:18}; the deviation is motivated by examples. Nevertheless, directed topological complexity, introduced in \cite{GFS:18} is shown to be invariant under our notion of directed homotopy equivalence. Finally, we show that directed homotopy equivalences result in isomorphisms on the pair component categories of directed spaces introduced in \cite{Raussen:18}.
\end{abstract}

\subjclass{55P10, 55P60, 55U99, 68Q85}
\keywords{d-space, inessential d-map, directed homotopy equivalence, 2-out-of-3 property, directed topological complexity}
\maketitle

\section{Introduction}
\subsection{Motivation}\label{ss:mot}
Directed Algebraic Topology arose as an attempt to make methodologies from Algebraic Topology useful in the analysis of concurrency phenomena in theoretical computer science; consult eg \cite {Grandis:09, FGHMR:16} for details. It is easy to agree upon the fundamental notions of Directed Algebraic Topology (ie d-space, d-map and d-homotopy, cf Definition \ref{def:d} and \ref{def:d-h}). 

But what are the symmetries of such a d-space, and what is a directed homotopy equivalence; when are two d-spaces homotopy equivalent in the directed sense? The first idea (suggested by several authors) is to adapt directly the notion from ordinary topology: A d-map $f: X\to Y$ would be a directed homotopy equivalence if there exists a d-map $g:Y\to X$ such that $g\circ f$ and $f\circ g$ are d-homotopic (cf Definition \ref{def:d-h}) to the respective identity maps. But this definition does not make sure that the most interesting objects in Directed Algebraic Topology, ie the spaces of d-paths in $X$ from $x$ to $y$ resp.\ of d-paths in $Y$ from $fx$ to $fy$ are related! Have a look at Example \ref{ex:spheres2} for a simple such case. This paper suggests both a notion of symmetry of a d-space (an inessential d-map) and the related notion of directed homotopy equivalence between two d-spaces, and it explores their properties, both algebraically and topologically. More detailed requests to directed homotopy equivalences are formulated in Section \ref{ss:dhe}.

\subsection{Organization of the paper. Results} In Section \ref{s:iness}, we define path space preserving d-maps as those keeping the homotopy types of all path spaces between source and target invariant. Maps from a d-space into itself that are path space preserving and d-homotopic to the identity map are called inessential. It is shown that inessential maps (and also a generalization of those, arising from a certain closure process) are closed under composition and, in some cases, under factorization. 

Section \ref{s:dhe} proposes a new definition of various types of directed homotopy equivalences (and several generalizations). It allows for example to perceive a difference between certain branching spaces and a one point space, in contrast to a similar proposal in \cite{Goubault:17} and \cite{GFS:18}; cf Example \ref{ex:wedge2}. We investigate closure and factorization properties and show that the most important notion of directed homotopy equivalence satisfies the 2-out-of-3 property (cf Section \ref{ss:dhe} and \cite[Definition 1.1.3]{Hovey:99}) in the category of directed spaces; crucial for every thinkable proposal for a model structure on that category.

In Section \ref{s:dTC}, we show that directed topological complexity of d-spaces \cite{GFS:18} is invariant under a sharp version of directed homotopy equivalence. In the final Section \ref{s:pcc}, we show that directed homotopy equivalences result in isomorphisms on the pair component categories of \cite{Raussen:18}. Simple non-trivial examples illustrate the concepts.
\subsection{Elementary notions. Previous results}
Let $I=[0,1]$ denote the unit interval. The following notions are fundamental in Directed Algebraic Topology \cite{Grandis:09} and in applications in concurrency theory \cite{FGHMR:16}:
\subsubsection{d-paths. Traces}
\begin{definition}\label{def:d}
\begin{enumerate}
\item \cite{Grandis:01} A d-space consists of a topological space $X$ together with a subspace $\vec{P}(X)\subset X^I$ that contains the constant paths, is closed under concatenation and under \emph{non-decreasing} reparametrizations $p: I\to I$. Elements of $\vec{P}(X)$ are called d-paths.
\item For $x,y\in X$, we let $\vec{P}(X)_x^y=\{ p\in\vec{P}(X)|\; p(0)=x, p(1)=y\}$ denote the subspace of all d-paths from $x$ to  $y$. We write $x\preceq y\Leftrightarrow\vec{P}(X)_x^y\neq\emptyset$ and call then $(x,y)$ a reachable pair.
\item \cite{Grandis:01} A d-map $f: X\to Y$ between d-spaces $X$ and $Y$ is continuous and satsfies $f(\vec{P}(X))\subseteq\vec{P}(Y)$.
\item The category $\mathbf{dTop}$ has d-spaces as objects and d-maps as morphisms.
\end{enumerate} 
\end{definition}

\begin{definition}\label{def:e}
\begin{enumerate}
\item We let $\vec{X^2}:=\{ (x,y)\in X^2|\; x\preceq y\}$ denote the space of reachable pairs.
\item The end-point map $\vec{e}_X: \vec{P}(X)\to\vec{X^2}$ fibres $\vec{P}(X)$ with non-empty fibres $\vec{P}(X)_x^y$.
\end{enumerate} 
\end{definition}

\begin{remark}
Note that, in contrast to path spaces in ordinary topology, these end point maps very rarely are fibrations: the homotopy types of the fibres do most often vary, d-spaces are in general not homogeneous at all!
\end{remark}

\begin{example}
Simple but fundamental d-spaces are the non-directed interval $I^0$ with $\vec{P}(I^0)=I^I$, the neutral interval $I$ with $\vec{P}(I)$ consisting of the constant paths, and the directed interval $\vec{I}$ with $\vec{P}(\vec{I}):=\{ p\in I^I|\; p \mbox { non-decreasing}\}$. In these cases, the spaces of reachable pairs coincide with $I\times I$, with the diagonal $\Delta (I):=\{ (x,x)|\; x\in I\}$, resp.\ with $\Delta _2(I):=\{ (x,y)\in I\times I|\; x\le y\}$. 
\end{example}

For every d-space $X$, the monoid $\{ p\in\vec{P}(\vec{I})|\; p(0)=0, p(1)=1\}$ of surjective reparametrizations acts on $\vec{P}(X)$ by composition; it identifies d-paths under reparametrization equivalence \cite{FR:07}. Equivalence classes are called \emph{traces}, they are the elements of the quotient trace space $\vec{T}(X)$. The end-point map $\vec{e}_X$ (Definition \ref{def:e}) factors over $\vec{T}(X)$. We will use the same notation for the resulting map $\vec{e}_X: \vec{T}(X)\to\vec{X^2}$ -- with fibres $\vec{T}(X)_x^y$. 
For categorical constructs, traces are better behaved, because concatenation of traces is associative. On the other hand, it is easier to handle, e.g., homotopies of paths than homotopies of traces. 

In applications to concurrency, the d-spaces under consideration are usually directed $\Box$-sets (=pre-cubical sets), or their geometric realizations: 
\begin{definition}\label{df:pcs}
\begin{enumerate}
\item A $\Box$\emph{-set} $X$ (also called a pre-cubical set) is a sequence of disjoint sets $X_n,\; n>0$, equipped with \emph{face maps} $d_i^{\alpha}: X_n\to X_{n-1},\; \alpha\in\{+,-\}, 1\le i\le n$, satisfying the pre-cubical relations: $d_i^{\alpha}d_j^{\beta}=d_{j-1}^{\beta}d_i^{\alpha}$ for $i<j$.\\
Elements of $X_n$ are called $n$-cubes, those of $X_0$ are called vertices.
\item A $\Box$-set $X$ is called \emph{non-self-linked} if every cube $x\in X^n$ has $2^i$${n}\choose{i}$ \emph{different} iterated $(n-i)$-faces, $0<i\le n$. 
\item The \emph{geometric realization} of a pre-cubical set $X$ is the d-space
\[|X|=\bigcup_{n\ge 0}X_n\times \vec{I}^n/_{[d_i^{\alpha}(c),x]\sim [c,\delta_i^{\alpha}(x)]}\] 
with $\delta_i^{\alpha}(x_1,\dots ,x_{n-1})=(x_1,\dots, x_{i-1}, s_{\alpha}, x_i,\dots , x_{n-1})$ and $s_{\alpha}= 0$ (resp.\ $1$) for $\alpha =-$ (resp.\ $\alpha =+$).
\item A path $p\in\vec{P}(|X|)$ is directed if there are $0=t_0<t_1<\dots <t_k=1$, cubes $c_i\in X_{n_i}$ and directed paths $p_i:[t_{i-1},t_i]\to \vec{I}^{n_i}$ with $p(t)=[c_i,p_i(t)]$ for $t\in [t_{i-1},t_i]$.
\end{enumerate}
\end{definition}

We allow ourselves to use the most convenient setting (paths resp.\ traces) in a given situation because of 

\begin{proposition}\cite[Proposition 2.16]{Raussen:09}
For the geometric realization of a $\Box$-set $X$, the quotient maps $\vec{P}(X)_{x_1}^{x_2}\to\vec{T}(X)_{x_1}^{x_2},\; (x_1,x_2)\in\vec{X}^2$, are homotopy equivalences.
\end{proposition}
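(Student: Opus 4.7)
The plan is to exhibit a continuous section $\sigma\colon\vec{T}(|X|)_{x_1}^{x_2}\to\vec{P}(|X|)_{x_1}^{x_2}$ of the quotient map $q$ and to show that $\sigma\circ q$ is homotopic to the identity on $\vec{P}(|X|)_{x_1}^{x_2}$; together with the obvious $q\circ\sigma=\mathrm{id}$ this makes $q$ a homotopy equivalence.

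First I would construct a natural parametrization via arc length. For $p\in\vec{P}(|X|)_{x_1}^{x_2}$, Definition \ref{df:pcs}(4) yields a decomposition into finitely many pieces $p_i\colon[t_{i-1},t_i]\to\vec{I}^{n_i}$, each lying in a single cube. On each cube I use the $\ell^1$-length $L(p_i)=\sum_j\bigl(p_i^{(j)}(t_i)-p_i^{(j)}(t_{i-1})\bigr)$, which equals the sum of coordinate increments because $p_i$ is directed. Summing and normalising yields a continuous non-decreasing surjection $\bar L_p\colon I\to I$ (the cumulative normalised length). Defining $\hat p(s):=p(\bar L_p^{-1}(s))$, with $\bar L_p^{-1}$ chosen as the infimum of preimages, gives a well-defined d-path, since on any maximal interval where $\bar L_p$ is constant the path $p$ itself is constant (no length accrues). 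Setting $\sigma([p]):=\hat p$ produces a section: $\hat p\sim p$ in the trace sense, and the construction is invariant under precomposition with a non-decreasing surjection $\varphi\in\vec{P}(\vec{I})_0^1$, because $\bar L_{p\circ\varphi}=\bar L_p\circ\varphi$ and hence $(p\circ\varphi)\circ\bar L_{p\circ\varphi}^{-1}=p\circ\bar L_p^{-1}$.

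For the homotopy, I use that the set $\vec{P}(\vec{I})_0^1$ of non-decreasing surjections $I\to I$ is convex. Given $p$, form $\varphi_u:=(1-u)\bar L_p+u\cdot\mathrm{id}_I\in\vec{P}(\vec{I})_0^1$ and set $H_u(p):=\hat p\circ\varphi_u$. Then $H_0(p)=\hat p\circ\bar L_p=p$ (again using constancy of $p$ on flat intervals of $\bar L_p$) and $H_1(p)=\hat p=\sigma(q(p))$, while each intermediate path is directed because it is a non-decreasing reparametrization of a d-path. The homotopy $H$ is compatible with varying endpoints only insofar as it fixes them, which is exactly what we need for $\vec{P}(|X|)_{x_1}^{x_2}$.

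The main obstacle is establishing joint continuity of $\sigma$ and of $(p,u)\mapsto H_u(p)$, particularly at paths $p$ whose $\bar L_p$ develops flat intervals that appear only in the limit. I would argue cube by cube: on a fixed $\vec{I}^n$, the $\ell^1$-length is a continuous functional of $p$ in the sup-norm, so $\bar L_p$ depends continuously on $p$; combined with uniform continuity of $p$, the composite $p\circ\bar L_p^{-1}$ remains continuous in $p$ even where $\bar L_p^{-1}$ jumps, because the relevant values of $p$ at the endpoints of a flat interval coincide. The case of paths whose decomposition into cubes changes under perturbation is handled by refining the partition and using that short sub-pieces contribute arbitrarily little to total length, so one can re-group the $p_i$ without affecting $\bar L_p$ or $\hat p$ in the limit. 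This local-to-global continuity analysis is the genuinely technical part; once it is in place, the conclusion that $q$ is a homotopy equivalence is formal.
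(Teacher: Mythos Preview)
The paper does not supply its own proof of this proposition; it simply quotes the result from \cite{Raussen:09} (building on \cite{FR:07}). So there is nothing in the present paper to compare your argument against line by line.

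That said, your approach --- pick out a canonical ``natural'' representative in each trace class via normalised $\ell^1$-arc-length and then linearly interpolate reparametrizations --- is exactly the strategy used in the cited sources. The section $\sigma$ you describe is the naturalisation map of \cite{FR:07}, and the convex homotopy through $\varphi_u=(1-u)\bar L_p+u\,\mathrm{id}_I$ is the standard way to contract the fibre of reparametrizations. So substantively you have reconstructed the intended proof.

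Two points worth tightening. First, the degenerate case $x_1=x_2$ with $\bar L_p\equiv 0$ must be handled separately (both spaces are then single points, so nothing is at stake, but your normalisation is undefined there). Second, your continuity sketch is correct in spirit but hides the real work: the map $p\mapsto\hat p$ is continuous not because $\bar L_p^{-1}$ is (it is not), but because one can bound $|\hat p(s)-\hat q(s)|$ directly in terms of $\|p-q\|_\infty$ using that $\hat p$ is $1$-Lipschitz for the $\ell^1$-metric on $|X|$; this is where \cite{FR:07} invests most of its effort, and your cube-by-cube discussion would benefit from invoking that Lipschitz property explicitly rather than arguing via the discontinuous $\bar L_p^{-1}$.
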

  
 \noindent \begin{minipage}{0.75\textwidth}
\begin{example}\label{ex:sphere}
\cite{Ziemianski:18} Let $n>1$ and $\partial \Box_n:=\{ \mb{x}=(x_1,\dots, x_n)\in I^n|\; \exists i: x_i=0 \mbox{ or } x_i=1\}$ with the d-structure inherited from the standard product d-structure on $\mb{R}^n$, ie $\vec{P}(\partial \Box_n)=(\partial\Box_n)^I\cap\vec{P}(\mb{R}^n)$. Then the trace space $\vec{T}(\partial \Box_n)_{\mb{x}}^{\mb{y}}$ is contractible unless $\{ i|\; x_i=0\}=\{ i|\; y_i=1\}$.  If these two sets agree, then $\vec{T}(\partial \Box_n)_{\mb{x}}^{\mb{y}}$ is homotopy equivalent to $S^{k-2}$ with $k=|\{ i|\, x_i=0, y_i=1\}|$.

In the figure on the right, the trace space from the bottom to the top vertex is homotopy equivalent to $S^1$; from a point on a bottom edge to a point on a top edge in the same colour homotopy equivalent to $S^0$, and contractible (if not empty) otherwise.
\end{example}
\end{minipage}\hfill 
\begin{minipage}{0.2\textwidth}
\begin{tikzpicture}
\draw[magenta] (0,0) -- (2,0);
\draw[magenta] (1,3) -- (3,3);
\draw[green] (0,0) -- (0,2);
\draw[green] (3,1) -- (3,3);
\draw[red] (0,0) -- (1,1);
\draw[red] (2,2) -- (3,3);
\draw (2,0) -- (3,1);
\draw (0,2) -- (1,3);
\draw (0,2) -- (2,2);
\draw (2,0) -- (2,2);
\draw[dashed] (1,1) -- (1,3); 
\draw[dashed] (1,1) -- (3,1);

\draw[blue,fill=blue] (0,0) circle (.5ex);
\draw[blue,fill=blue] (3,3) circle (.5ex);
\end{tikzpicture}
\end{minipage}

\medskip The following results show that trace spaces of $\Box$-sets can be quite complicated:

\begin{proposition}
\begin{enumerate}
\item \cite[Proposition 3.15]{Raussen:09} Path and trace spaces in a non-self-linked $\Box$-set have the homotopy type of a CW-complex.
\item \cite{Ziemianski:16}
For every finite CW complex $Z$, there exists a $\Box$-set $X$ (without directed loops) with vertices $x_1,x_2\in X$ such that $\vec{T}(X)_{x_1}^{x_2}$ is homotopy equivalent to $Z$.
\end{enumerate}
\end{proposition}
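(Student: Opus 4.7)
The proposition bundles two complementary results whose proofs go in opposite directions: (1) extracts a combinatorial CW model from an arbitrary non-self-linked $\Box$-set, while (2) is a realization result that builds a $\Box$-set around a prescribed CW complex. My plan treats them separately.

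For (1), the strategy is to stratify $\vec{T}(X)_{x_1}^{x_2}$ by the combinatorial data supplied by Definition \ref{df:pcs}(4). Every d-path is encoded by a finite sequence of cubes $c_1,\dots,c_k$, the face identifications linking $c_i$ to $c_{i+1}$, and a directed path in each $\vec{I}^{n_i}$ between two prescribed boundary faces. The non-self-linked hypothesis is essential here: it guarantees that the combinatorial datum is unambiguously extractable from the underlying trace, because no two distinct iterated faces of a cube can be confused. The combinatorial types then form a poset $P(x_1,x_2)$ ordered by refinement (subdividing a path into more cubes), and each open stratum is a product of spaces of directed paths in cubes with fixed start- and end-faces, hence contractible. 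I would then collapse these contractible strata to their poset labels and invoke a nerve-type, Quillen Theorem A style argument to produce a weak equivalence between $\vec{T}(X)_{x_1}^{x_2}$ and the order complex $|P(x_1,x_2)|$, which is a CW complex by construction.

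For (2), the plan is inductive on a CW structure of $Z$. Example \ref{ex:sphere} already realizes every sphere $S^m$ as $\vec{T}(\partial\Box_{m+2})_{\mb{0}}^{\mb{1}}$, so the sphere case is free. Elementary cubical operations realize elementary homotopy-theoretic operations: gluing two $\Box$-sets at source and target vertices gives wedges, cubical products give topological products, and disjoint copies with shared endpoints give joins in a controlled way. The core step is to realize a single cell attachment: given a $\Box$-set $X$ with $\vec{T}(X)_{x_1}^{x_2}\simeq Z^{(n-1)}$ and an attaching map $\varphi\colon S^{n-1}\to Z^{(n-1)}$, one must construct $X'\supset X$ whose trace space realizes the mapping cone. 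The idea is to attach a ``corridor'' whose internal trace space encodes $\varphi$ combinatorially and whose concatenation with $X$ realizes the cone up to homotopy, using Example \ref{ex:sphere} as the source of $S^{n-1}$ inside the corridor. Iterating over the cells of $Z$ produces the desired $\Box$-set.

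The main obstacle in (1) is verifying that collapsing the contractible strata actually yields a homotopy equivalence rather than merely a bijection on combinatorial types; this requires cofibrancy of the stratification and depends on knowing the CW structure of directed path spaces in $\vec{I}^n$ with fixed boundary data. The main obstacle in (2) is the cell-attachment step: one must design a local cubical gadget that realizes an arbitrary combinatorial approximation to the attaching map, show the resulting trace space is homotopy equivalent to the prescribed mapping cone, and simultaneously ensure that trace spaces between vertex pairs other than $(x_1,x_2)$ are not altered in a way that breaks the induction. The rigidity of the non-self-linked hypothesis, which was a hypothesis in (1), becomes a design constraint on the construction in (2).
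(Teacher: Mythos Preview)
The paper does not prove this proposition at all: both parts are quoted as results from the literature, with citations to \cite[Proposition 3.15]{Raussen:09} and \cite{Ziemianski:16}, and no argument is supplied. There is therefore no ``paper's own proof'' to compare your proposal against.

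That said, a brief comment on the substance of your sketch. For (1), your outline is broadly in the spirit of the actual argument in \cite{Raussen:09}: one does extract a combinatorial model (a prod-simplicial complex indexed by sequences of cubes traversed) and shows that the trace space deformation retracts onto it. Your phrasing in terms of a poset of combinatorial types and a Quillen Theorem~A style collapse is a reasonable paraphrase, though the published proof is more hands-on and does not invoke Theorem~A explicitly. For (2), your inductive cell-attachment scheme is plausible in outline, but the actual construction in \cite{Ziemianski:16} is considerably more delicate than a generic ``corridor'' gadget; in particular, realizing an \emph{arbitrary} attaching map $\varphi\colon S^{n-1}\to Z^{(n-1)}$ by a cubical subcomplex whose trace space has the correct homotopy type is the entire content of that paper, and your sketch does not indicate how this is to be done beyond asserting that it should be possible. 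So your proposal for (2) identifies the right induction but leaves the inductive step as a black box, which is precisely where the work lies.
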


A d-map $f: X\to Y$ induces maps $\vec{f^2}: \vec{X^2}\to\vec{Y}^2,\; (x_1,x_2)\mapsto (fx_1,fx_2)$, $\vec{P}(f): \vec{P}(X)\to\vec{P}(Y)$ and $\vec{T}(f): \vec{T}(X)\to\vec{T}(Y), \sigma\mapsto f\circ \sigma$. They fit into a commutative diagram (depicted for $\vec{T}$):
\[\begin{xymatrix}{\vec{T}(X)_{x_1}^{x_2}\ar[r]^{\vec{T}(f)_{x_1}^{x_2}}\ar@{^{(}->}[d] & \vec{T}(Y)_{fx_1}^{fx_2}\ar@{^{(}->}[d]\\
\vec{T}(X)\ar[r]^{\vec{T}(f)}\ar[d]_{\vec{e}_X} & \vec{T}(Y)\ar[d]^{\vec{e}_Y}\\
\vec{X^2}\ar[r]^{\vec{f^2}} & \vec{Y}^2
}\end{xymatrix}\]
with restricted maps $\vec{T}(f)_{x_1}^{x_2}: \vec{T}(X)_{x_1}^{x_2}\to\vec{T}(Y)_{fx_1}^{fx_2}$ on fibres over $(x_1,x_2)\in\vec{X^2}$. Similarly for path spaces.
\subsubsection{Directed homotopies}
\begin{definition}\label{def:d-h}
\begin{enumerate}
 \item A d-map $H:X\times\vec{I}\to X$ is called a \emph{d-homotopy} from $H_0$ to $H_1$. $H_1$ is then called future d-homotopic to $H_0$ and $H_0$ is called past d-homotopic to $H_1$.  Notation: $H_0\nearrow H_1$ or $H_1\nwarrow H_0$.
 \item Two d-maps maps $f, g: X\to Y$ are called \emph{d-homotopic} if there exists a zig-zag sequence $f=f_0\nearrow f_1\nwarrow f_2\nearrow\dots \nwarrow f_n=g$ of d-homotopies.
\item A d-map $H:X\times I\to Y$ is called a \emph{neutral d-homotopy} between $H_0$ and $H_1$. The d-maps $H_0$ and $H_1$ are called \emph{neutrally d-homotopic}.
\item Two d-maps $f,g: X\to Y$ are called \emph{dihomotopic} if there exists a neutral d-homotopy with $H_0=f$ and $H_1=g$.
    \end{enumerate}
\end{definition}

We will be particularly interested in these notions for endo-d-maps $f: X\to X$ from a space into itself that are related to the identity map $id_X$ by a d-homotopy.

\begin{example}\label{ex:wedge}
The two relations future and past d-homotopic differ essentially even in simple examples:
Consider the d-space $B=\{ (x,y)\in\mb{R}^2|\; x,y\ge 0, xy=0\}=$\begin{tikzpicture} \draw [->] (-0.2,-0.2) -- (-0.2,0.1);  \draw [->] (-0.2,-0.2) -- (0.1,-0.2);\end{tikzpicture} with the d-structure inherited from the standard d-structure of $\mb{R}^2$ and modelling a future branching. 

A d-homotopy with $H_0=id_B$ preserves the two branches $x=0$ and $y=0$ and thus their intersection point $O=(0,0)$; in particular, $H_1$ cannot be constant. 

On the other hand, the d-homotopy $H(x,y;t)=(tf_1(x,y),tf_2(x,y))$ connects the constant map $H_0$ with every d-map $f=(f_1,f_2):B\to B$. Hence all d-maps from $B$ to $B$ are dihomotopic to each other.

In greater generality, if $H:X\times\vec{I}\to X$ is a d-homotopy with $H_1(X)=\{x_0\}$ for some $x_0\in X$, then there exists a d-path from $x$ to $x_0$ for every $x\in X$.
\end{example}

The following example shows that d-homotopies do not preserve homotopy types of path spaces, and therefore the naive notion of directed homotopy equivalence mentioned in Section \ref{ss:mot} is not satisfactory:

\begin{example}\label{ex:spheres2}
Let $p: I\to I$ be the piecewise linear reparametrization with $p(0)=0, p(0.5)=p(1)=1$; there is a convex d-homotopy $H$ with $H_0=id_I$ and $H_1=p$. Hence there is also a convex d-homotopy on $\partial\Box_n$ with $H_0=id$ and $H_1=p^n: \partial\Box_n\to \partial\Box_n$. Hence $p$ and $id$ are d-maps that are inverse to each other up to d-homotopy.

\noindent \begin{minipage}{0.8\linewidth}Let $\mb{0}=(0,\dots ,0)$ and $\mb{x}=(1, 0.5,\dots , 0.5)$ and note that $p^n(\mb{0})=\mb{0}$ and $p^n(\mb{x})=\mb{1}$. Then $\vec{T}(\partial \Box_n)_{\mb{0}}^{\mb{x}}$ is contractible whereas $\vec{T}(\partial \Box_n)_{\mb{0}}^{\mb{1}}$ is homotopy equivalent to $S^{n-2}$ (cf Example \ref{ex:sphere}). In particular, $\vec{T}(p): \vec{T}(\partial\Box_n)_{\mb{0}}^{\mb{x}}\to\vec{T}(\partial\Box_n)_{\mb{0}}^{\mb{1}}$ is not a homotopy equivalence.
\end{minipage}\qquad
\begin{minipage}{0.2\linewidth}
\begin{tikzpicture}
\draw (0,0) --(0,2) --(2,2) -- (2,0) -- (0,0);
\draw node at (2.2,2) {$\mb{1}$};
\draw node at (2.2,1) {$\mb{x}$};
\draw (2,1) circle (.2ex);
\draw (2,2) circle (.2ex);
\draw (0,0) circle (.2ex);
\draw node at (-0.2,0) {$\mb{0}$};
\end{tikzpicture}
\end{minipage}
\end{example}

\subsection{Requests to the notion of direceted homotopy equivalence}\label{ss:dhe}
We can now formulate reasonable requests that a notion of directed homotopy equivalence should satisfy: A directed homotopy equivalence should 
\begin{enumerate}
\item have a homotopy inverse $g:Y\to X$ such that $g\circ f$ is homotopic to $id_X$ and $f\circ g$ is homotopic to $id_Y$ -- in one of the flavours of directed homotopy from Definition \ref{def:d-h}.
\item satisfy that all maps $\vec{T}(f)_{x_1}^{x_2}: \vec{T}(X)_{x_1}^{x_2}\to\vec{T}(Y)_{fx_1}^{fx_2},\;  (x_1,x_2)\in\vec{X^2}$ are (weak) homotopy equivalences in the classical sense. Moreover,
\item In the category $\mathbf{dTop}$ (objects: d-spaces, morphisms: d-maps; cf fx \cite{FGHMR:16}), the directed homotopy equivalences enjoy the 2-out-of-3 property (cf fx \cite[Definition 1.1.3]{Hovey:99}).
\end{enumerate}
Ir is furthermore desirable to obtain a version of directed homotopy equivalence for which a branching space like  $B=$\begin{tikzpicture} \draw [->] (-0.2,-0.2) -- (-0.2,0.1);  \draw [->] (-0.2,-0.2) -- (0.1,-0.2);\end{tikzpicture} from Example \ref{ex:wedge} is \emph{not} contractible in a directed sense.

I have called item (1) alone in the list above the naive definition of directed homotopy equivalence since it does not take care of path spaces at all. This problem is taken care of in the definition presented in \cite{Goubault:17} and \cite{GFS:18}: it requires (2) in a coherent sense (cf Definition \ref{def:cpsp}) but it does not ensure (1) nor (3).
 
Our definition of directed homotopy equivalence (in Definition \ref{def:dhe}) does not satisfy alle requests on the nose. Item (1) is part of the definition, Lemma \ref{lem:F-eq} shows that (2) is satisfied ``up to inessential endo-maps'' and Proposition \ref{prop:2-3} shows that (3) is satisfied for neutral d-homotopy equivalences. The branching space $B$ from Example \ref{ex:wedge} is not future homotopy equivalent to a one point d-space. Note also the more elaborate Example \ref{ex:wedge2}(2) of a directed graph $W$ which is not homotopy equivalent to the one point space $O$ in any directed sense according to our definition -- although one can easily check that the constant map $c:W\to O$ is a classical homotopy equivalence satisfying (2) above. 

\section{(Rather) Inessential d-maps}\label{s:iness}

\subsection{Path space preserving d-maps and d-homotopies}\label{ss:psp}

Ordinary topological spaces give rise to loop spaces that are interesting to study on their own behalf. d-spaces give rise to, and organise, many spaces of (directed) paths, one path or trace space $\vec{T}(X)_{x_1}^{x_2}$ for every pair of points $(x_1,x_2)\in\vec{X}^2$; cf Definition \ref{def:e}(1). For reasonable d-spaces, like the $\Box$-spaces from Definition \ref{df:pcs}, these path spaces depend only mildly on the pair of points; they are stable within so-called components \cite{Ziemianski:18, Raussen:18}. If we wish that a d-map not only relates the topology of its domain and target, but also the topology of all assembled path spaces, we need to add the following requirement:  

\begin{definition}\label{def:psp}
  \begin{enumerate}
  \item A d-map $f:X\to Y$ is called \emph{path space preserving}
    (psp for short) if
    $\vec{T}(f)_{x_1}^{x_2}: \vec{T}(X)_{x_1}^{x_2}\to\vec{T}(Y)_{fx_1}^{fx_2}$ is a weak homotopy equivalence for all $(x_1,x_2)\in\vec{X}^2$.
    \item A d-homotopy $H: X\times\vec{I}\to Y$, resp.\ a neutral d-homotopy $H:X\times I\to Y$, is called psp if every parameter d-map $H_t:X\to Y,\; t\in I,$ is psp.
    \item Adding psp to the requirements in Definition \ref{def:d-h} leads to the relations \emph{future/past}, resp.\ \emph{neutrally} \emph{psp d-homotopic}.
  \end{enumerate}
  \end{definition}
  
\begin{example}\label{ex:square}
This example investigates psp maps $f:\partial\Box_n\to\partial\Box_n, n\ge 2$, cf  Example \ref{ex:sphere}(1). It shows that the psp-properties in Definition \ref{def:psp} impose severe restrictions. First of all, let $\mb{0}$ and $\mb{1}$ denote the extreme vertices in $\Box_n$. The pair $(\mb{0},\mb{1})$ is the only one giving rise to a trace space $\vec{T}(\partial\Box_n)_{\mb{0}}^{\mb{1}}\simeq S^{n-2}$. As a consequence, every psp endo d-map $f:\partial\Box_2\to\partial\Box_2$ has to fix both $\mb{0}$ and $\mb{1}$ and the complement of the set $\{\mb{0},\mb{1}\}$. 

The case $n=2$ has to be dealt with separately: The identity map and the reflection in the diagonal are (non-d-homotopic) psp maps. Every d-map $f: \partial\Box_2\to\partial\Box_2$ that preserves $\mb{0}, \mb{1}, \{ (x,y)|\; x>y\}$ and $\{ (x,y)|\; x<y\}$ is neutrally psp d-homotopic to the identity map. The vertices $(0,1)$ and $(1,0)$ are not preserved, in general.

In the following, let $n>2$: Let $V=\{ 0,1\}^n$ denote the set of all vertices, let $V_k\subset V, 0\le k\le n$ denote the set of all vertices with exactly $k$ coordinates equal to $1$. We want to establish, that a psp map $f: \partial\Box_n\to\partial\Box_n$ preserves all vertex sets $V_k, k\neq 1,n-1$. To this end, let $B_k$ denote the set of all $\mb{x}$ with exactly $n-k$ coordinates equal to $0$ and none equal to $1$, resp.\ $B^k$ the set of all $\mb{x}$ with exactly $n-k$ coordinates $1$ and none equal to $0$ (unions of interiors of lower, resp\ upper $k$-faces).   By Example \ref{ex:sphere}(1), $f$ preserves $B_k$ and $B^k$ for $k<n-1$. By continuity, $f$ preserves also their closures and intersections of those. In particular, for $k\neq 1, n-1$, $f$ preserves $V_k=\bar{B}_{n-k}\cap\bar{B}^k$. 

On the other hand, every permutation $\sigma\in\Sigma_n$ defines a psp d-map $\bar{\sigma}: \partial\Box_n\to\partial\Box_n,\\ (x_1,\dots ,x_n)\to (x_{\sigma (1)},\dots ,x_{\sigma (n)})$, that permutes elements in $V_k$. 

Now we want to prove that every \emph{future} psp d-homotopy $H: \partial\Box_n\times\vec{I}\to\partial\Box_n$ from the identity map (cf Definition \ref{def:d-h}(1)) has to \emph{fix all vertices} in $V$:
Such a psp d-homotopy $H$ preserves each \emph{lower} $k$-face in $B_k$, and each \emph{upper} $l$-face in $B^{l}$ (characterized by \emph{which} coordinates are $0$, resp.\ $1$) and their closures for $k,l<n-1$.  In particular, every vertex in $V_k$ is fixed for $k\neq 1, n-1$. Now suppose $k=1$. Since every lower 1-face and its closure are preserved by $H$,  its single upper boundary vertex (with $n-1$ coordinates $0$ and 1 coordinate $1$) needs to be fixed since $H$ is a future d-homotopy.

\noindent\begin{minipage}{0.7\textwidth}
Finally, we consider the vertex $\mb{x}=[1,\dots ,1,0]$; more generally a vertex in $V_{n-1}$. Let $f:=H_1$. Suppose $f(\mb{x})=(1,\dots ,1,t)$ with $0<t<1$. Since $f$ is homotopic to $id_X$ and thus surjective, there exists $\mb{y}=(y_1,\dots ,y_{n-1},0)\neq\mb{x}$ with $f(\mb{y})=\mb{x}$; without restriction, we assume $y_{n-1}<1$. The future of $\mb{y}$ (shaded in the adjacent figure) is mapped onto the edge connecting $\mb{x}$ and $\mb{1}$. It contains the element $\mb{z}=(1,\dots ,y_{n-1},1)$ which therefore has to be mapped to $\mb{1}$. Contradiction, since a psp d-map maps only $\mb{1}$ into $\mb{1}$. 
\end{minipage} \hfill
\begin{minipage}{0.3\textwidth}
\begin{tikzpicture}[scale=0.8]
\draw (0,0) -- (3,0) -- (3,3) -- (0,3) --(0,0);
\draw (3,0) -- (4.5,1.5) -- (4.5,4.5) -- (3,3);
\draw (0,3) -- (1.5,4.5) -- (4.5,4.5);
\draw [dashed] (1.5,4.5) -- (1.5,1.5) -- (4.5,1.5);
\draw [dashed] (1.5,1.5) -- (0,0);
\fill[fill=lightgray] (2,3) -- (2,2) -- (3,2) -- (3,3);
\fill[fill=lightgray] (3,3) -- ((3,2) -- (4.5,3.5) -- (4.5,4.5) -- (3,3);
\node at (3,3.2) {$\mb{x}$};
\node at (3.5,4) {$f(\mb{x})$};
\node at (1.8,2) {$\mb{y}$};
\node at (4.7,3.5) {$\mb{z}$};
\node at (4.7,4.5) {$\mb{1}$};
\node at (-0.2,0) {$\mb{0}$};
\node at (4.5,4.5) {$\bullet$};
\node at (4.5,3.5) {$\bullet$};
\node at (3,3) {$\bullet$};
\node at (2,2) {$\bullet$};
\node at (4,4) {$\bullet$};
\node at (0,0) {$\bullet$};
\end{tikzpicture}
\end{minipage}

We conclude that $f$ preserves all faces of $\partial\Box_n$ (including their boundaries). In fact, $f$ preserves also the \emph{interiors} of faces: Assume  $f(x_1,\dots ,x_n)=(x_1',\dots ,x_{n-1}',1)$ with $x_n<1$. Since $f$ is a d-map and since $n>2$, $f(1,\dots ,1,x_n)=\mb{1}$. Contradiction!

A similar proof works for maps that are past psp d-homotopic to the identity.

\end{example}

\subsection{Coherently path space preserving maps}
Definition \ref{def:psp} relates trace spaces in the domain $X$ and the codomain $Y$ to each other, but these equivalences may not be coherent: their ``inverses'' -- for varying pairs $(x_1,x_2)\in\vec{X}^2$ are not necessarily collected into a  continuous map. We need to formulate an additional requirement in order to achieve this:

For a d-map $f: X\to Y$, consider the commmutative diagram
\begin{equation}\label{eq:coh}
\xymatrix{\vec{T}(X)\ar[d]_{e_X}\ar[r]^{\vec{T}(f)} &  (\vec{f^2})^*\vec{T}(Y)\ar[d]^{(\vec{f^2})^*(e_Y)}\ar[r] & \vec{T}(Y)\ar[d]^{e_Y}\\
\vec{X}^2\ar[r]^{=} & \vec{X}^2\ar[r]^{\vec{f^2}} & \vec{Y}^2}
\end{equation}
with $(\vec{f^2})^*(e_Y)$ the pullback of $e_Y$ along with $\vec{f^2}$; ie $(\vec{f^2})^*\vec{T}(Y)$ has fibre $\vec{T}Y_{fx_1}^{fx_2}$ over $(x_1,x_2)\in\vec{X}^2$.

\begin{definition}\label{def:cpsp}
\begin{enumerate}
\item A psp d-map $f: X\to Y$ is called \emph{coherently} \emph{psp} if the map $\vec{T}(f): \vec{T}(X)\to (\vec{f^2})^*\vec{T}(Y)$ (on the left hand side of (\ref{eq:coh})) is a fibre homotopy equivalence, i.e, there exists a ``homotopy inverse'' continuous map $F: (\vec{f^2})^*\vec{T}(Y)\to\vec{T}(X)$ over the identity map on $\vec{X^2}$ such that  $F\circ\vec{T}(f): \vec{T}(X)\to\vec{T}(X)$ and $\vec{T}(f)\circ F: (\vec{f^2})^*\vec{T}(Y)\to(\vec{f^2})^*\vec{T}(Y)$ are fibre homotopic to the respective identity maps.
\item A d-homotopy $H:X\times I\to Y$ is called \emph{coherently} \emph{psp} if the fibre map $\bigsqcup_t\vec{T}(H_t): \vec{T}(X)\times I\to (\vec{H^2})^*\vec{T}(Y)$ has a continuous fibre homotopy inverse
$\bar{H}: (\vec{H^2})^*\vec{T}(Y)\to\vec{T}(X)\times I$ (everything over $X\times I$).
\end{enumerate}
\end{definition}

\begin{remark}
The coherence requirement in Definition \ref{def:cpsp} is inspired by a similar requirement to what is called a directed homotopy equivalence in \cite{Goubault:17} and \cite{GFS:18}. In the context of this paper, it becomes essential in the discussion of directed topological complexity in Section \ref{s:dTC}.
\end{remark}

\begin{lemma}\label{lem:psp} (Coherently) path space preserving maps are closed under composition and partially under factorization: Let $g:X\to Y$ and $h:Y\to Z$ denote d-maps.
\begin{enumerate}
\item If $g$ and $h$ are (coherently) psp, then $h\circ g$ is so as well. 
\item If $h$ and $h\circ g$ are (coherently) psp, then $g$ is so, as well.  
\end{enumerate}
\end{lemma}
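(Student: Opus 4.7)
The plan is to reduce both statements to standard closure properties of (weak) fibre homotopy equivalences by exploiting functoriality of the construction $f\mapsto\vec{T}(f)$. The crucial observation is that, since $\vec{(h\circ g)^2}=\vec{h^2}\circ\vec{g^2}$, one has $(\vec{(h\circ g)^2})^*\vec{T}(Z)=(\vec{g^2})^*(\vec{h^2})^*\vec{T}(Z)$, and the map $\vec{T}(h\circ g)$ factors, over the identity on $\vec{X^2}$, as
$$\vec{T}(X)\xrightarrow{\vec{T}(g)}(\vec{g^2})^*\vec{T}(Y)\xrightarrow{(\vec{g^2})^*\vec{T}(h)}(\vec{g^2})^*(\vec{h^2})^*\vec{T}(Z),$$
where the second arrow is the pullback of $\vec{T}(h):\vec{T}(Y)\to(\vec{h^2})^*\vec{T}(Z)$ along $\vec{g^2}$. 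Restricting to the fibre over $(x_1,x_2)\in\vec{X^2}$ recovers the pointwise factorization $\vec{T}(h\circ g)_{x_1}^{x_2}=\vec{T}(h)_{gx_1}^{gx_2}\circ\vec{T}(g)_{x_1}^{x_2}$.

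For the plain psp case I would argue fibrewise: (1) follows because weak homotopy equivalences compose, and (2) follows from the 2-out-of-3 property for weak homotopy equivalences, applied at each pair $(x_1,x_2)\in\vec{X^2}$ to the two factors above.

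For the coherent case I would invoke three standard facts about the class $\mc{F}$ of fibre homotopy equivalences over a fixed base: (a) $\mc{F}$ is closed under composition; (b) $\mc{F}$ is stable under pullback along arbitrary continuous maps of bases (a fibre homotopy inverse of the pullback being the pullback of a fibre homotopy inverse); (c) $\mc{F}$ satisfies 2-out-of-3. From (b), $(\vec{g^2})^*\vec{T}(h)$ belongs to $\mc{F}$ over $\vec{X^2}$ whenever $\vec{T}(h)$ belongs to $\mc{F}$ over $\vec{Y^2}$. Claim (1) then follows from (a) applied to the displayed composition, and claim (2) from (c) applied to the same composition, since two of the three arrows are known to be fibre homotopy equivalences.

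The proof is thus largely bookkeeping once the factorization is in place; the only step that warrants genuine care is checking that the candidate inverse in (1), namely the composition of a fibre homotopy inverse of $\vec{T}(g)$ with the pulled-back inverse of $\vec{T}(h)$, is continuous and commutes with the projections to $\vec{X^2}$. Both of these assertions follow at once from the universal property of the pullback, so I do not expect a substantive obstacle.
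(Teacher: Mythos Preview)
Your proposal is correct and follows essentially the same route as the paper: both arguments rest on the factorization $\vec{T}(h\circ g)=(\vec{g^2})^*\vec{T}(h)\circ\vec{T}(g)$ over $\vec{X^2}$, then invoke closure under composition and 2-out-of-3 for weak homotopy equivalences (plain case) and for fibre homotopy equivalences together with stability under pullback (coherent case). The paper spells out the candidate inverses explicitly rather than appealing to the abstract properties (a)--(c), but the content is the same.
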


\begin{proof}
Without the coherence request, the claims follow from the 2-out-of-3 property of weak homotopy equivalences. Concerning coherence: The fibre map $\vec{T}(h):\vec{T}(Y)\to (\vec{h^2})^*\vec{T}(Z)$ and its homotopy ``inverse'' $H: (\vec{h^2})^*\vec{T}(Z) \to\vec{T}(Y)$ over $\vec{Y}^2$ pull back to homotopy inverse fibre homotopy equivalences $(\vec{g^2})^*\vec{T}(h): (\vec{g^2})^*\vec{T}(Y)\to (\vec{h^2}\circ \vec{g^2})^*\vec{T}(Z)$ and $(\vec{g^2})^*H: (\vec{h^2}\circ \vec{g^2})^*\vec{T}(Z)\to (\vec{g^2})^*\vec{T}(Y)$ over $\vec{X}^2$. 
\begin{enumerate}
\item The map $\vec{T}(h\circ g)=(\vec{g^2})^*\vec{T}(h)\circ\vec{T}(g)$ is a fibre homotopy equivalence; its homotopy inverse is the composition of  $(\vec{g^2})^*H$ with the homotopy inverse $G$ of $\vec{T}(g)$ over $\vec{X}^2$. 
\item If $HG$ denotes the fibre map homotopy inverse to $\vec{T}(h\circ g)$ over $\vec{X}^2$, then the composition of the fibre maps $(\vec{g^2})^*\vec{T}(h): (\vec{g^2})^*\vec{T}(Y)\to (\vec{h^2}\circ \vec{g^2})^*\vec{T}(Z)$ with the fibre map $HG: (\vec{h^2}\circ \vec{g^2})^*\vec{T}(Z)\to \vec{T}(X)$ -- both over $\vec{X}^2$ -- is a homotopy inverse to $\vec{T}(g): \vec{T}(X)\to (\vec{g^2})^*\vec{T}(Y)$. This can be seen as in the proof of the fact that homotopy equivalences satisfy the 2-out-of-3 property.
\end{enumerate}
\end{proof}

It is in general \emph{not} true that $g$ and $h\circ g$ psp implies $h$ psp: we obtain only information on traces in the image $g(X)$ of $g$.
 
\subsection{Inessential d-maps}\label{ss:iness} 
Now we specialize to the case of endo-d-maps $f: X\to X$ from a d-space $X$ into itself. In particular:

\begin{definition}\label{def:iness}
A d-map $f: X\to X$ is called (in the notation of Definition \ref{def:d-h})
\begin{enumerate}
\item \emph{future inessential} if there is a psp d-homotopy $id_X\nearrow f$. 
\item \emph{past inessential} if  there is a psp d-homotopy $f\nearrow id_X$.
\item \emph{neutrally inessential} if it is neutrally psp d-homotopic to $id_X$. 
\end{enumerate}
We will write $\alpha$-inessential with  $\alpha =+,-,0$ (+: future, -: past, $0$: neutral).

A d-map is called \emph{coherently} $\alpha$-inessential if, in addition, the d-homotopies to the identity map can be chosen as coherent $\alpha$-psp d-homotopies.
\end{definition}

\begin{remark}
$\alpha$-inessential d-maps correspond to a weak directed version of deformation retraction. We do not insist that the d-homotopies restrict to the identity map on a subspace. It would also be possible to relate d-spaces and d-subspaces by such a relation; compare \cite{Dubut:17}.
\end{remark}

\begin{example}
\begin{enumerate}
\item Consider the branching space $B$ from Example \ref{ex:wedge}. Every d-map $f: B\to B$ is coherently $0$-psp inessential. It is coherently $+$ (resp.\ $-$)-psp inessential if and only if $x\le f(x)$, resp.\ $f(x)<x$ for every $x\in B$.
\item A d-map $f:\partial\Box_2\to\partial\Box_2$ is coherently $0$--psp inessential if it preserves $\mb{0}, \mb{1}$ and the branches $\{ (x,y)|\; x<y\}$ and $\{ (x,y)|\; x>y\}$. It is coherently $+$ (resp.\ $-$)-psp inessential if and only if, moreover, $\mb{x}\preceq f(\mb{x})$, resp.\ $f(\mb{x})\preceq \mb{x}$ for every $\mb{x}\in \partial\Box_2$.
\item A d-map $f:\partial\Box_n\to\partial \Box_n, n>2,$ is $0$-psp inessential if it preserves all interiors of all face; cf Example \ref{ex:square}. The cases $+$, resp.\ $-$ inessential require, in addition, that $\mb{x}\preceq f(\mb{x})$, resp.\ $f(\mb{x})\preceq \mb{x}$, for every $\mb{x}\in\partial\Box_n$. 

Such a d-map $f$ is also \emph{coherently} psp inessential, as will be shown for $\alpha = +$ in the first place: Since $\mb{x}$ and $f(\mb{x})$ are contained in the same face for every $\mb{x}\in\partial\Box_n$, we may select the constant speed line d-path $\sigma_{\mb{x}}$  connecting $\mb{x}$ and $f(\mb{x})$, within $\partial\Box_n$. Together, these maps define a continuous lift $L_f: \partial\Box_n\to\vec{P}(\partial\Box_n)$ of the map $id_{\partial\Box_n}\times f: \partial\Box_n\to \vec{\partial\Box_n^2}$ into the end point map $\vec{e}_{\partial\Box_n}$. Let $s, t: \vec{P}(\partial\Box_n)\to\partial\Box_n$ denote the source and target maps (the components of the end point map $\vec{e}$). Concatenation on the right with $L_f\circ t:  \vec{P}(\partial\Box_n)\to\vec{P}(\partial\Box_n)$ maps $\vec{P}(\partial\Box_n)_{\mb{x}}^{\mb{y}}$ into $\vec{P}(\partial\Box_n)_{\mb{x}}^{f(\mb{y})}$; concatenation on the left with $L_f\circ s: \vec{P}(\partial\Box_n)\to\vec{P}(\partial\Box_n)$ maps $\vec{P}(\partial\Box_n)_{f(\mb{x})}^{f(\mb{y})}$ into $\vec{P}(\partial\Box_n)_{\mb{x}}^{f(\mb{y})}$. The diagram\\
\begin{minipage}{0.25\linewidth}
$\xymatrix{\vec{P}(\partial\Box_n)\ar[r]^{*(L_f\circ s)}\ar[d]_{f\circ} & \vec{P}(\partial\Box_n)\\
\vec{P}(\partial\Box_n)\ar[ur]_{(L_f\circ t)*} & }$
\end{minipage}\hfill
\begin{minipage}{0.7\linewidth}
commutes up to homotopy, cf \cite{Raussen:07, Raussen:18}. The map $L_f\circ t$ has a fibre homotopy inverse by mapping, for $\mb{x}\preceq \mb{y}$, a path $\sigma\in\vec{P}(\partial\Box_n)_{\mb{x}}^{f(\mb{y})}$ to the path $\sigma_f\in\vec{P}(\partial\Box_n)_{\mb{x}}^{\mb{y}}$ given by $\sigma_f(t)=\min (\mb{y},\sigma (t))$ -- well-defined since $\mb{y}$ and $f(\mb{y})$ belong to the same cube. Similarly, a fibre inverse to $L_f\circ s$ maps $\sigma\in\vec{P}(\partial\Box_n)_{\mb{x}}^{f(\mb{y})}$ to $\sigma_f\in\vec{P}(\partial\Box_n)_{f(\mb{x})}^{f(\mb{y})}$ given by $\sigma^f(t)=\max (f(\mb{x}),\sigma (t))$. Composition of $(L_f\circ t)_*$ with the fibre inverse of $*(L_f\circ s)$ yields a homotopy inverse to $\vec{P}(f)=f\circ$.
\end{minipage}

For $\alpha = -$, one may select the constant speed line path connecting $\mb{x}$ and $\mb{x}$ instead. For $\alpha =0$, connect $\mb{x}$ with $f(\mb{x})$ via $\max (\mb{x},f(\mb{x}))$ by a zig-zag of d-paths within the same cube. The diagram on the left provides a fibre homotopy inverse to the map $f\circ$.
\end{enumerate}
\end{example}

\begin{lemma}\label{lem:comp}
The family of (coherently) $\alpha$-inessential endo-d-maps is closed under composition.
\end{lemma}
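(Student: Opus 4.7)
My plan is to reduce composition of $\alpha$-inessential maps to a concatenation construction, handling the three flavours $\alpha = +, -, 0$ in parallel and invoking Lemma \ref{lem:psp}(1) slicewise. Let $f, g: X \to X$ be $\alpha$-inessential, with (coherent) psp $\alpha$-d-homotopies $H: id_X \simeq f$ and $K: id_X \simeq g$. First I would handle the case $\alpha = +$ in full. Define $L: X \times \vec{I} \to X$ by
\[ L(x,t) = \begin{cases} K(x, 2t), & t \in [0,\tfrac12], \\ g(H(x, 2t-1)), & t \in [\tfrac12, 1], \end{cases} \]
which is continuous since $K_1 = g = g \circ H_0$, is a d-map on $X\times\vec{I}$ (each half is non-decreasing in $t$ and the pieces match up), and satisfies $L_0 = id_X$, $L_1 = g \circ f$. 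Each slice $L_t$ is psp: on $[0,\tfrac12]$ by hypothesis on $K$, on $[\tfrac12,1]$ because $L_t = g \circ H_{2t-1}$ is a composition of psp maps, hence psp by Lemma \ref{lem:psp}(1). For $\alpha = -$, I would compose in the reverse order ($f$ first, then $g$); for $\alpha = 0$, the same formulas work with $I$ replacing $\vec{I}$.

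For the coherent case, I need a continuous fibre homotopy inverse $\bar L$ to the parametrized fibre map $\bigsqcup_t \vec{T}(L_t)$ over $\vec{X}^2 \times I$. On the first half I would use (a linear reparametrization of) the given coherent inverse $\bar K$. On the second half the required inverse is obtained by a parametrized version of the composition argument in the proof of Lemma \ref{lem:psp}(1): since each $H_s$ carries a coherent homotopy inverse $\bar H_s$ continuous in $s \in I$, and $g$ carries a fixed coherent homotopy inverse $G$, the assignment $s \mapsto \bar H_s \circ (\vec{H_s^2})^*G$ assembles into a continuous fibre homotopy inverse to $\bigsqcup_s \vec{T}(g \circ H_s)$ over $\vec{X}^2 \times [\tfrac12, 1]$. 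Gluing at $t = \tfrac12$ is continuous because both halves restrict to the same coherent inverse of $\vec{T}(g) = \vec{T}(K_1) = \vec{T}(L_{1/2})$.

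The main obstacle is checking this continuous matching at $t = \tfrac12$ and verifying that the concatenated fibre maps $\bigsqcup_t \vec{T}(L_t) \circ \bar L$ and $\bar L \circ \bigsqcup_t \vec{T}(L_t)$ are fibre homotopic to the identities over $\vec{X}^2 \times I$. I would obtain the latter by concatenating the fibre homotopies furnished on each half — explicitly, on $[0,\tfrac12]$ they come from coherence of $K$, and on $[\tfrac12, 1]$ they come from composing the fibre homotopies supplied by coherence of $H$ (pulled back along $\vec{g^2}$) with those supplied by coherence of $g$, exactly as in Lemma \ref{lem:psp}(1). With these fibre homotopies in hand, $L$ is a coherent psp d-homotopy, so $g \circ f$ is coherently $\alpha$-inessential, finishing all three cases.
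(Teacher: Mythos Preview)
Your concatenation argument mirrors the paper's proof: the paper composes the homotopy $K$ (from $id_X$ to $g$) with $f$ on the \emph{right} to pass $id_X \to f \to g\circ f$, while you compose $H$ with $g$ on the \emph{left} to pass $id_X \to g \to g\circ f$; these are dual and equally valid, and the slicewise psp check via Lemma~\ref{lem:psp}(1) is the same in both. The non-coherent case is fine in all three flavours.

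In the coherent case there is a small gap at the gluing step. You assert that at $t=\tfrac12$ both halves restrict to ``the same coherent inverse of $\vec{T}(g)$'', but the second half at $s=0$ is $\bar H_0 \circ (\vec{H_0^2})^*G = \bar H_0 \circ G$, and $\bar H_0$ is only a fibre homotopy inverse to $\vec{T}(H_0)=\vec{T}(id_X)$ --- hence fibre \emph{homotopic} to the identity, not equal to it. So the two pieces need not agree on the nose, and your $\bar L$ is not yet continuous. This is easily repaired (reparametrize $\bar H$ using a fibre homotopy $\bar H_0\simeq id$ so that the modified $\bar H$ has $\bar H_0 = id$ exactly, then take $G=\bar K_1$; alternatively invoke a gluing lemma for fibrewise homotopy equivalences over a decomposition of $I$), but the argument as written does not close it. The paper is itself terse on this point, appealing only to Lemma~\ref{lem:psp}.
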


\begin{proof}
By assumption, there are $\alpha$-psp-d-homotopies connecting $id_X$ with inessential d-maps $f,g: X\to X,\; \alpha = +,-,0$. Compose the homotopy connecting $id_X$ with $g$ with the map $f$ on the  right to produce an $\alpha$--d-homotopy connecting $f$ with $g\circ f$. This homotopy is psp since weak homotopy equivalences are closed under composition. Concatenate the resulting d-homotopy with the original one connecting $id_X$ with $f$ to obtain an $\alpha$-psp d-homotopy connecting $id_X$ with $f\circ g$. 

If the original d-homotopies are coherently psp, then there composition is so, as well, by Lemma \ref{lem:psp}.
\end{proof}
Inessential d-maps are, moreover, partially closed under factorization:

\begin{lemma}\label{lem:factor}
Let $g,h:X\to X$ denote endo d-maps such that $h\circ g$ and $h$ are (coherently) $0$-inessential. Then $g$ is (coherently) $0$-inessential. 
\end{lemma}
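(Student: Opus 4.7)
The plan is to construct the required neutral psp d-homotopy between $id_X$ and $g$ out of the two given ones. First, since $h$ and $h\circ g$ are both (coherently) psp (as $0$-inessentiality implies psp at parameter $t=1$), Lemma \ref{lem:psp}(2) forces $g$ itself to be (coherently) psp. It remains to produce the homotopy.

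Let $K, L: X\times I\to X$ be neutral (coherently) psp d-homotopies witnessing $0$-inessentiality of $h$ and $h\circ g$, i.e.\ $K_0 = id_X$, $K_1 = h$, $L_0 = id_X$, $L_1 = h\circ g$. Form the precomposition $M := K\circ (g\times id_I): X\times I\to X$, a neutral d-homotopy from $M_0 = g$ to $M_1 = h\circ g$. At every parameter $t\in I$, $M_t = K_t\circ g$ is a composition of (coherently) psp maps, hence itself (coherently) psp by Lemma \ref{lem:psp}(1); in the coherent setting, pulling back the fibre homotopy inverse of $\bigsqcup_t \vec{T}(K_t)$ along $\vec{g^2}\times id_I$ and composing with the fibre homotopy inverse of $\vec{T}(g)$ gives a continuous fibre homotopy inverse for $\bigsqcup_t \vec{T}(M_t)$ over $\vec{X^2}\times I$, exactly as in the proof of Lemma \ref{lem:psp}(1) carried out in the family setting. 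Thus $M$ is a neutral (coherently) psp d-homotopy from $g$ to $h\circ g$.

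Since the neutral interval $I$ carries only the constant directed paths, the reversal $\overline{M}(x,t) := M(x,1-t)$ is again a d-map, and reversal preserves both the psp and the coherent-psp conditions (the fibre homotopy inverse is simply re-indexed by $t\mapsto 1-t$). Concatenating $L$ (from $id_X$ to $h\circ g$) with $\overline{M}$ (from $h\circ g$ to $g$) in the neutral $I$-direction yields the desired neutral (coherently) psp d-homotopy from $id_X$ to $g$, so $g$ is (coherently) $0$-inessential.

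The main technical point I expect to require care is the preservation of coherence under this concatenation: the fibre homotopy inverses of $L$ and $\overline{M}$ agree, by construction, on the common slice at $h\circ g$, so they glue along $\vec{X^2}\times\{1/2\}$ (under the standard reparametrization of the neutral parameter) to a continuous fibre homotopy inverse over $\vec{X^2}\times I$ in the sense of Definition \ref{def:cpsp}(2). This is routine but is where one must actually use that the data are continuous in the parameter, rather than only pointwise in $t$.
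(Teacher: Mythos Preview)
Your argument is correct and follows the same route as the paper: precompose the given psp dihomotopy $K$ (from $id_X$ to $h$) by $g$ to obtain a psp dihomotopy from $g$ to $h\circ g$, then concatenate (after reversal) with the psp dihomotopy $L$ from $id_X$ to $h\circ g$. The paper leaves implicit the step you make explicit, namely that $g$ is psp by Lemma~\ref{lem:psp}(2); and for the coherent case the paper says only ``follows from Lemma~\ref{lem:psp}'', where you spell out the pullback-and-compose construction.

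One small inaccuracy: you assert that the fibre homotopy inverses of $L$ and $\overline{M}$ \emph{agree, by construction,} on the common slice at $h\circ g$. There is no reason for this: the inverse coming from $L$ at $t=1$ is whatever witness $L$ carries, whereas the inverse for $\overline{M}$ at $t=0$ is obtained by composing the pulled-back inverse of $\vec{T}(K_1)=\vec{T}(h)$ with your chosen inverse of $\vec{T}(g)$. Both are fibre homotopy inverses to $\vec{T}(h\circ g)$ over $\vec{X^2}$, hence fibre homotopic to each other, and one inserts that fibre homotopy over a collar to glue; but they need not literally coincide. This is indeed routine, as you say, just not for the reason stated.
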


\begin{proof}
By assumption, there are dihomotopies connecting $id_X$ with $h$ resp.\ with $h\circ g$. Compose the first one with $g$ on the right resulting in a dihomotopy connecting $g$ with $h\circ g$. The dihomotopies involved induce maps $\vec{T}(H_t\circ g):\vec{T}(X)_{x_1}^{x_2}\to\vec{T}(X)_{gx_1}^{gx_2}\to\vec{T}(X)_{H_tgx_1}^{H_tgx_2}$ for  $(x_1,x_2)\in\vec{X}^2$; these maps are weak homotopy equivalences since both $H_t$ and $g$ are psp. Concatenate with the second psp dihomotopy  to connect $g$ with $id_X$. The coherent version follows from Lemma \ref{lem:psp}.
\end{proof}
Similar to the statement concluding Section \ref{ss:psp}, it is \emph{not} possible to conclude: $h\circ g$ and $g$ $0$-inessential implies $h$ is $0$-inessential; there is no information about trace spaces $\vec{T}(X)_{y_1}^{y_2}$ with (one of the) $y_i$ outside the image $g(X)$ of $g$. Inessential maps do thus \emph{not} satisfy the 2-out-of-3 property.
It is unlikely that  Lemma \ref{lem:factor} holds for $\alpha =+,-$: The proof results in a zig-zag of d-homotopies and not a simple d-homotopy starting at $id_X$, and there is no reason for a such to exist.

The following technical results will be needed in Sections \ref{ss:ri} and \ref{s:dhe}:

\begin{lemma}\label{lem:psp2}
Let $f: X\to Y$ and $g:Y\to X$ denote d-maps such that $g\circ f: X\to X$ and $f\circ g: Y\to Y$ are psp. Then the maps $f, g$ are psp each.
\end{lemma}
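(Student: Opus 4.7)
The plan is to exploit the classical "retract in the homotopy category" trick: if a map $\phi$ has both a left and a right homotopy inverse in the homotopy category, then $\phi$ is itself a weak equivalence. The hypotheses give us precisely this via two consecutive factorizations of trace-space maps.

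Fix a reachable pair $(x_1,x_2)\in\vec{X}^2$. By functoriality of $\vec{T}$, the map $\vec{T}(g\circ f)_{x_1}^{x_2}$ factors as
\[
\vec{T}(X)_{x_1}^{x_2}\xrightarrow{\;\phi_1\;}\vec{T}(Y)_{fx_1}^{fx_2}\xrightarrow{\;\phi_2\;}\vec{T}(X)_{gfx_1}^{gfx_2},
\]
with $\phi_1=\vec{T}(f)_{x_1}^{x_2}$ and $\phi_2=\vec{T}(g)_{fx_1}^{fx_2}$, and $\phi_2\phi_1$ is a weak equivalence since $g\circ f$ is psp. Applying psp of $f\circ g$ at the pair $(fx_1,fx_2)\in\vec{Y}^2$ gives a second factorization
\[
\vec{T}(Y)_{fx_1}^{fx_2}\xrightarrow{\;\phi_2\;}\vec{T}(X)_{gfx_1}^{gfx_2}\xrightarrow{\;\phi_3\;}\vec{T}(Y)_{fgfx_1}^{fgfx_2},
\]
with $\phi_3=\vec{T}(f)_{gfx_1}^{gfx_2}$, and $\phi_3\phi_2$ is a weak equivalence. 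Passing to the homotopy category, $\phi_2$ acquires a right inverse $\phi_1(\phi_2\phi_1)^{-1}$ and a left inverse $(\phi_3\phi_2)^{-1}\phi_3$; these agree, so $\phi_2$ is a weak equivalence. Then the 2-out-of-3 property of weak homotopy equivalences applied to $\phi_2\phi_1$ forces $\phi_1=\vec{T}(f)_{x_1}^{x_2}$ to be a weak equivalence as well. Since $(x_1,x_2)$ was arbitrary, $f$ is psp.

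The argument for $g$ is symmetric: pick any $(y_1,y_2)\in\vec{Y}^2$ and run the same three-map diagram with the roles of $f$ and $g$ reversed, starting from the factorization of $\vec{T}(f\circ g)_{y_1}^{y_2}$ through $\vec{T}(X)_{gy_1}^{gy_2}$ and continuing through the factorization of $\vec{T}(g\circ f)_{gy_1}^{gy_2}$. This shows $\vec{T}(g)_{y_1}^{y_2}$ is a weak equivalence, completing the proof.

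The only real content is the first paragraph; the main obstacle is purely bookkeeping — keeping straight which four trace spaces appear as source and target of the three maps, and verifying that the two hypotheses really do apply at the pairs $(x_1,x_2)$ and $(fx_1,fx_2)$ respectively. No coherence is used, and there is no need to take any additional homotopies along the way.
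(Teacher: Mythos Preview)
Your proof is correct and follows the same approach as the paper: the paper simply invokes the 2-out-of-6 property of weak homotopy equivalences on the three-term string $\vec{T}(f),\vec{T}(g),\vec{T}(f)$ (and the symmetric string for $g$), while you spell out the standard proof of 2-out-of-6 via left/right inverses of the middle map followed by 2-out-of-3. The content and the diagrams are identical.
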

 
\begin{proof}
  Apply the 2-out-of-6 property for weak homotopy equivalences to the following strings of
  maps:
\[\xymatrix{\vec{T}(X)_{x_1}^{x_2}\ar[r]^{\vec{T}(f)} & \vec{T}(Y)_{f(x_1)}^{f(x_2)}\ar[r]^{\vec{T}(g)} & \vec{T}(X)_{gf(x_1)}^{gf(x_2)}\ar[r]^{\vec{T}(f)} & \vec{T}(Y)_{fgf(x_1)}^{fgf(x_2)};}\]
\[\xymatrix{\vec{T}(Y)_{y_1}^{y_2}\ar[r]^{\vec{T}(g)} & \vec{T}(X)_{g(y_1)}^{g(y_2)}\ar[r]^{\vec{T}(f)} & \vec{T}(Y)_{fg(y_1)}^{fg(y_2)}\ar[r]^{\vec{T}(g)} & \vec{T}(X)_{gfg(y_1)}^{gfg(y_2)}}.\]
\end{proof}
 
It turns out handy that inessential maps behave well under the following form of insertion:

\begin{lemma}\label{lem:insert}
Let $f: X\to Y, h: Y\to Y$ and $g: Y\to X$ denote d-maps such that $g\circ f$ is a (coherently) $\alpha$-inessential d-map on $X$ and  $h$ is a (coherently) $\alpha$-inessential d-map on $Y$. Then $g\circ h \circ f$ is a (coherently) $\alpha$-inessential d-map on $X$.
\end{lemma}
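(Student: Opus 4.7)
The plan is to exhibit an $\alpha$-psp d-homotopy connecting $id_X$ and $g\circ h\circ f$ by concatenating two psp d-homotopies of matching variance. By hypothesis there exist $\alpha$-psp d-homotopies $K\colon X\times \vec{I}\to X$ between $id_X$ and $g\circ f$, and $H\colon Y\times \vec{I}\to Y$ between $id_Y$ and $h$ (with boundary values dictated by $\alpha\in\{+,-,0\}$). Define $L\colon X\times \vec{I}\to X$ by $L(x,t) = g(H(f(x),t))$; this is a composition of d-maps, hence a d-map, and as a d-homotopy it inherits the variance of $H$ and connects $g\circ f$ to $g\circ h\circ f$. Concatenating $K$ with $L$ (via the standard reparametrization) produces an $\alpha$-d-homotopy $M$ connecting $id_X$ and $g\circ h\circ f$.

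It remains to verify that every parameter map of $M$ is psp. The parameter maps of the $K$-portion are psp by hypothesis. The crux is therefore to show that each $L_t = g\circ H_t\circ f$, $t\in\vec{I}$, is psp. At $t=0$, $L_0 = g\circ f$ is psp by hypothesis; for general $t$, the issue is that $f$ and $g$ individually are not assumed psp, so the naive 2-out-of-3 argument (Lemma~\ref{lem:psp}) does not apply directly to the factorization
\[\vec{T}(L_t)_{x_1}^{x_2} = \vec{T}(g)_{H_tfx_1}^{H_tfx_2}\circ\vec{T}(H_t)_{fx_1}^{fx_2}\circ\vec{T}(f)_{x_1}^{x_2}.\]
My plan is to use the continuous family of d-paths $s\mapsto g(H(f(x),s))$ in $X$ (from $gf(x)$ to $gH_tf(x)$) to relate the fibre maps $\vec{T}(L_t)$ and $\vec{T}(L_0)$ up to pre- and post-concatenation, then to invoke a 2-out-of-6 argument in the spirit of Lemma~\ref{lem:psp2} to transfer the weak equivalence property from $\vec{T}(L_0) = \vec{T}(g\circ f)$ to every $\vec{T}(L_t)$.

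The main obstacle is to justify this propagation rigorously — specifically, to control the concatenation-induced maps between the differently-indexed trace-space fibres so that the 2-out-of-6 argument really concludes that $\vec{T}(L_t)$ is a weak equivalence. For the coherent case, I would sidestep this pointwise analysis and apply Lemma~\ref{lem:psp} in the global fibred setting: the coherent fibre homotopy inverse of the fibre map induced by $H$ pulls back along $\vec{f^2}$, and composing it with (the pullback of) the coherent fibre inverse coming from $K$ produces a fibre homotopy inverse to the fibre map induced by $M$ over $\vec{X^2}\times I$, yielding the required coherent psp structure.
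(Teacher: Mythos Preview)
Your overall strategy coincides with the paper's: whisker the psp d-homotopy $H$ (from $id_Y$ to $h$) by $g$ and $f$ to obtain a d-homotopy $gHf\colon X\times\vec{I}\to X$ from $g\circ f$ to $g\circ h\circ f$, then concatenate with the given psp d-homotopy from $id_X$ to $g\circ f$. You also correctly isolate the crux---showing that each parameter map $L_t=g\circ H_t\circ f$ is psp---and correctly observe that the naive factorisation $\vec{T}(g)\circ\vec{T}(H_t)\circ\vec{T}(f)$ does not yield this immediately, since $f$ and $g$ are not individually assumed psp.

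But you then stop short: your sketched plan (concatenation with the flow paths $s\mapsto g(H(f(x),s))$ plus a 2-out-of-6 argument ``in the spirit of'' Lemma~\ref{lem:psp2}) is left as an unfulfilled intention, and you explicitly flag it as ``the main obstacle''. The paper dispatches precisely this step in one line, invoking Lemma~\ref{lem:psp2} together with closure of weak homotopy equivalences under composition. Your hesitation is not groundless---Lemma~\ref{lem:psp2} as stated asks for \emph{both} $g\circ f$ and $f\circ g$ to be psp, whereas the hypotheses of the present lemma only guarantee the former---so the paper's justification is itself compressed. Still, as a matter of comparison: the paper regards the step as settled by that citation, while your proposal leaves it open. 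For the coherent variant, your plan (pull back the fibre homotopy inverse of $\bigsqcup_t\vec{T}(H_t)$ along $\vec{f^2}$ and compose with the coherent inverse coming from $K$, via Lemma~\ref{lem:psp}) matches the paper's one-line addendum.
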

\begin{proof}
Let $H: Y\times\vec{I}\to Y$ denote a d-homotopy  connecting $id_Y$ and $h$. Then the whisker composition $gHf: X\times\vec{I}\to X,\; (x,t)\mapsto gH_t(f(x))$, is a d-homotopy between $gf$ and $ghf$. It is psp according to Lemma \ref{lem:psp2} and since weak homotopy equivalences are closed under composition. Concatenate with a psp d-homotopy connecting $id_X$ and $gf$. The coherent version of the statement is proved by using Lemma \ref{lem:psp} on top.
\end{proof}

\subsection{Rather inessential d-maps}\label{ss:ri}
The following definition takes care of the fact that inessentialness is not a 2-out-of-3 property: 
\begin{definition}\label{def:ri}
An endo d-map $h$ on $X$ is called (coherently) \emph{rather} $\alpha$-\emph{inessential} if there exists an (coherently) $\alpha$-inessential endo d-map $g$ on $X$ such that $h\circ g$ is (coherently) $\alpha$-inessential.
\end{definition}
\begin{remark}
The condition rather inessential is relaxed compared to inessential: One knows only that $\vec{T}(h)$ is a weak homotopy equivalence on trace spaces of the form $\vec{T}(X)_{gx_1}^{gx_2}$, on the image of the ``retraction'' $g$.
\end{remark}

\begin{lemma}\label{lem:comp2}
The family of (coherently) rather $\alpha$-inessential d-maps is closed under composition.
\end{lemma}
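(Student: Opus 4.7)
The goal is to show that if $h_1, h_2 : X \to X$ are (coherently) rather $\alpha$-inessential, then $h_2 \circ h_1$ is as well. By Definition \ref{def:ri}, we get (coherently) $\alpha$-inessential endo-d-maps $g_1, g_2$ such that $h_1 \circ g_1$ and $h_2 \circ g_2$ are $\alpha$-inessential. So I need to produce a single $\alpha$-inessential witness $g$ making $(h_2 \circ h_1) \circ g$ $\alpha$-inessential.

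The natural candidate is $g := g_1 \circ g_2$. By Lemma \ref{lem:comp}, $g$ is itself (coherently) $\alpha$-inessential, so the first half of Definition \ref{def:ri} is immediate. The content is then to prove that
\[ (h_2 \circ h_1) \circ (g_1 \circ g_2) \;=\; h_2 \circ (h_1 \circ g_1) \circ g_2 \]
is (coherently) $\alpha$-inessential.

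This is exactly the shape prepared by Lemma \ref{lem:insert}: with $X = Y$, take the outer composite to be $g_2$ on the right and $h_2$ on the left, and insert $h_1 \circ g_1$ in the middle. The hypothesis $g_{\text{outer,left}} \circ f = h_2 \circ g_2$ is $\alpha$-inessential by choice of $g_2$, and the inserted map $h_1 \circ g_1$ is $\alpha$-inessential by choice of $g_1$. Lemma \ref{lem:insert} then delivers the conclusion directly; the coherent version transfers verbatim because Lemma \ref{lem:insert} is stated in coherent form as well.

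The only thing I would pause over is the bookkeeping of associativity and parenthesisation: one must read $h_2 \circ h_1 \circ g_1 \circ g_2$ both as $(h_2 \circ h_1) \circ g$ (to certify that the composite $h_2 \circ h_1$ is rather inessential with witness $g = g_1 \circ g_2$) and as the three-term sandwich $h_2 \circ (h_1 \circ g_1) \circ g_2$ needed to invoke Lemma \ref{lem:insert}. No new homotopy is constructed: the entire argument is a regrouping supported by Lemmas \ref{lem:comp} and \ref{lem:insert}, so no genuine obstacle is expected.
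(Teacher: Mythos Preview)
Your proof is correct and follows essentially the same approach as the paper's own proof. Apart from a swap of the variable names (the paper calls the rather inessential maps $g_i$ and their witnesses $h_i$, whereas you do the reverse), the argument is identical: use Lemma~\ref{lem:comp} to show the composite witness is inessential, and regroup the four-fold composite as a sandwich to which Lemma~\ref{lem:insert} applies.
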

\begin{proof}
For rather $\alpha$-inessential d-maps $g_i$ there exist $\alpha$-inessential d-maps $h_i$ such that $g_i\circ h_i$ is $\alpha$-inessential. The endo d-map $g_1\circ g_2\circ h_2\circ h_1$ is $\alpha$-inessential by Lemma \ref{lem:insert}, and so is $h_2\circ h_1$ by Lemma \ref{lem:comp}.
\end{proof}

The following property shows that the family of rather inessential maps has better properties than the family of inessential maps. 
\begin{lemma}\label{lem:2-3}
Let $g,h:X\to X$ denote d-maps such that $g\circ h$ is (coherently) rather $\alpha$-inessential.
\begin{enumerate}
\item If $g$ is (coherently) $\alpha$-rather inessential, then $h$ is (coherently) rather $0$-inessential.
\item If $h$ is (coherently) $\alpha$ rather inessential, then so is $g$.
\end{enumerate}
\end{lemma}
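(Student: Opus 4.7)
The plan is to unpack the definition of rather $\alpha$-inessential in each case and then combine Lemmas~\ref{lem:comp}, \ref{lem:factor} and \ref{lem:insert} to produce the required witness maps. Throughout, write $k_1$ for an $\alpha$-inessential endo d-map with $g\circ h\circ k_1$ $\alpha$-inessential, which exists because $g\circ h$ is rather $\alpha$-inessential.

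For part (1), let $k_2$ denote an $\alpha$-inessential endo d-map witnessing that $g$ is rather $\alpha$-inessential, i.e.\ with $g\circ k_2$ $\alpha$-inessential. I apply Lemma~\ref{lem:insert} to the pair of d-maps $f:=h\circ k_1$ and $g$: since $g\circ f=g\circ h\circ k_1$ is $\alpha$-inessential and $k_2$ is an $\alpha$-inessential endo d-map on $X$, the composition $g\circ k_2\circ h\circ k_1$ is $\alpha$-inessential. Regrouping as $(g\circ k_2)\circ (h\circ k_1)$ and using that every $\alpha$-inessential map is in particular $0$-inessential, I can now apply Lemma~\ref{lem:factor} with the ``outer'' map $g\circ k_2$ (which is $0$-inessential) to conclude that $h\circ k_1$ is $0$-inessential. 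Since $k_1$ is itself $0$-inessential, this verifies that $h$ is rather $0$-inessential.

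For part (2), let $k_2$ now denote an $\alpha$-inessential endo d-map with $h\circ k_2$ $\alpha$-inessential, which exists because $h$ is rather $\alpha$-inessential. I take $M:=h\circ k_1\circ k_2$ as the candidate witness for $g$. To see that $M$ is $\alpha$-inessential, apply Lemma~\ref{lem:insert} to $f:=k_2$, outer map $h$, and the $\alpha$-inessential insertion $k_1$: since $h\circ k_2$ is $\alpha$-inessential, so is $h\circ k_1\circ k_2$. Finally, $g\circ M=(g\circ h\circ k_1)\circ k_2$ is a composition of two $\alpha$-inessential maps, hence $\alpha$-inessential by Lemma~\ref{lem:comp}. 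This exhibits $g$ as rather $\alpha$-inessential.

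The coherent versions of both statements follow by the same manipulations using the coherent refinements of Lemmas~\ref{lem:comp}, \ref{lem:factor} and \ref{lem:insert}. The only subtlety worth flagging is the asymmetry between (1) and (2): in (1) the final step requires Lemma~\ref{lem:factor}, which is only available for $\alpha=0$, which is exactly why the conclusion of (1) drops to rather $0$-inessential while that of (2) retains the original $\alpha$. The main obstacle is therefore not a technical one but the correct choice of auxiliary composition (inserting $k_2$ on the left of $h\circ k_1$ in (1), versus appending $k_2$ on the right in (2)); once these choices are made, the lemmas quoted do all of the work.
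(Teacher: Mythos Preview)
Your proof is correct and follows essentially the same route as the paper's: in both parts you unpack the witnesses from the rather-inessential hypotheses and combine Lemmas~\ref{lem:comp}, \ref{lem:factor}, and \ref{lem:insert} in the same way (the paper's $k,l$ are your $k_1,k_2$). The only cosmetic difference is in part~(2), where the paper takes the witness $h\circ k_2\circ k_1$ and verifies its inessentiality via Lemma~\ref{lem:comp} (then uses Lemma~\ref{lem:insert} for $g\circ(h k_2 k_1)$), whereas you take $h\circ k_1\circ k_2$ and swap which lemma handles which step; both orderings work.
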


\begin{proof}
\begin{enumerate}
\item By assumption, there exist inessential d-maps $k,l: X\to X$ such that $g\circ h\circ k$ and $g\circ l$ is inessential. By Lemma \ref{lem:insert}, the map $(gl)\circ (hk)=g\circ l\circ (hk)$ is inessential. Since $g\circ l$ is inessential, by Lemma \ref{lem:factor}, $h\circ k$ is $0$-inessential, and hence $h$ is rather $0$-inessential.
\item By assumption, there exist inessential d-maps $k,l: X\to X$ such that $g\circ h\circ k$ and $h\circ l$ are inessential. By Lemma \ref{lem:comp}, resp.\ Lemma \ref{lem:insert}, also the maps $h\circ l\circ k$ and $g\circ (hlk)=(gh)\circ l\circ k$ are inessential. Hence $g$ is rather inessential.
\end{enumerate}
\end{proof}

\begin{remark}
\begin{enumerate}
\item Lemma \ref{lem:comp2} and \ref{lem:2-3} together yield: The  rather $0$-inessential endo-d-maps on a d-space $X$ enjoy the 2-out-of-3 property.
\item In Lemma \ref{lem:2-3}(1), we can in general not conclude that $h$ is rather  $\alpha$-inessential. It is not certain that there exists a future (or past) d-homotopy relating $id_X$ and $h$.
\item It is possible to refine (1): If $g$ and $g\circ h$ are rather  $0$-inessential through a d-homotopy (instead of a dihomotopy, cf Definition \ref{def:d-h}(2)) from $id_X$, then $h$ is so, as well.
\end{enumerate}
\end{remark}
\subsection{Generalization to monoids}\label{ss:mono}
Considering the monoid $\vec{C}(X,X)$ of d-maps on $X$, we may localize the submonoid consisting of $\alpha$-inessential d-maps (and coherent versions), and declare, for inessential d-maps $g,h$ on $X$,  the d-maps $f$ and $h\circ f\circ g$ \emph{equivalent mod inessentials}. In particular, inessential maps are equivalent to the identity $id_X$, and hence so are rather inessential maps. Furthermore, we will in Section \ref{s:pcc} consider the quotient monoid with respect to the symmetric and transitive closure (an equivalence relation) of this relation.

The notions ``inessential'' and ``rather inessential'', for $\alpha = 0$, in the preceeding sections are special cases of the following picture comparing a monoid $(\mb{M},\circ )$ to a submonoid $\mb{S}\subset\mb{M}$ enjoying the ``inessentiality'' property (compare with Lemma \ref{lem:factor}): 
\begin{equation}\label{eq:iness}
g\in\mb{M}, h, h\circ g\in\mb{S}\Rightarrow g\in\mb{S}.\
\end{equation}
In such a situation, one may define a ``closure'' $\bar{\mb{S}}$ of $\mb{S}$ by $\bar{\mb{S}}:=\{ h\in\mb{M}|\;\exists g\in\mb{S}: h\circ g\in\mb{S}\}$. Using the same formal arguments as in Lemma \ref{lem:comp2} and Lemma \ref{lem:2-3}, one proves:

\begin{proposition}
Let  $\mb{S}\subset\mb{M}$ denote a pair of monoids enjoying the inessentiality property (\ref{eq:iness}). Then 
\begin{enumerate}
\item $\bar{\mb{S}}$ is a submonoid of $\mb{M}$ containing $\mb{S}$.
\item $\bar{\mb{S}}$ enjoys the 2-out-of-3-property, i.e., if two of the three elements $g, h, g\circ h$ are contained in $\bar{\mb{S}}$, then so is the third.
\end{enumerate}
\end{proposition}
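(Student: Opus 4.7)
The plan is to mimic, in purely combinatorial form, the proofs of Lemma \ref{lem:comp2} and Lemma \ref{lem:2-3}, under the dictionary $\mb{M}\leftrightarrow\vec{C}(X,X)$, $\mb{S}\leftrightarrow$ ($0$-inessential endo-d-maps), $\bar{\mb{S}}\leftrightarrow$ (rather $0$-inessential endo-d-maps). Those proofs rely on three formal inputs: the fact that $\mb{S}$ is closed under composition and contains the identity (the submonoid axiom), the inessentiality property (\ref{eq:iness}) itself (Lemma \ref{lem:factor}), and the abstract insertion principle playing the role of Lemma \ref{lem:insert}: \emph{if $a\circ b\in\mb{S}$ and $c\in\mb{S}$, then $a\circ c\circ b\in\mb{S}$.} The first two are explicit in the hypothesis; the third must be read into the phrase ``formal arguments'' or supplied as an additional axiom on $(\mb{M},\mb{S})$, as I discuss below.

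For (1), I would first note that the identity $e$ of $\mb{M}$ lies in $\mb{S}\subseteq\bar{\mb{S}}$ (with $e$ as its own witness), and that any $s\in\mb{S}$ belongs to $\bar{\mb{S}}$ with witness $e$, so $\mb{S}\subseteq\bar{\mb{S}}$. For closure of $\bar{\mb{S}}$ under composition, given $h_1,h_2\in\bar{\mb{S}}$ with witnesses $g_1,g_2\in\mb{S}$ (i.e.\ $h_i\circ g_i\in\mb{S}$), I would propose $g_2\circ g_1\in\mb{S}$ as witness for $h_1\circ h_2$: by associativity $(h_1\circ h_2)\circ(g_2\circ g_1)=h_1\circ(h_2\circ g_2)\circ g_1$, and the insertion principle applied with $a=h_1$, $b=g_1$, $c=h_2\circ g_2$ (noting $a\circ b=h_1\circ g_1\in\mb{S}$ and $c\in\mb{S}$) places this composite in $\mb{S}$.

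For (2), the implication $h_1,h_2\in\bar{\mb{S}}\Rightarrow h_1\circ h_2\in\bar{\mb{S}}$ is (1). Suppose $g, g\circ h\in\bar{\mb{S}}$ with witnesses $l,k\in\mb{S}$, so $g\circ l,\,g\circ h\circ k\in\mb{S}$. I would then apply the insertion principle to $g\circ(h\circ k)\in\mb{S}$ together with $l\in\mb{S}$ to obtain $(g\circ l)\circ(h\circ k)\in\mb{S}$; the inessentiality property (\ref{eq:iness}) with submonoid element $g\circ l$ then yields $h\circ k\in\mb{S}$, so $k$ witnesses $h\in\bar{\mb{S}}$. For the remaining case $h,g\circ h\in\bar{\mb{S}}$ with witnesses $l,k\in\mb{S}$ (so $h\circ l,\,g\circ h\circ k\in\mb{S}$), the submonoid axiom gives $h\circ l\circ k\in\mb{S}$, while insertion applied to $g\circ h\circ k\in\mb{S}$ together with $l\in\mb{S}$ yields $g\circ h\circ l\circ k=g\circ(h\circ l\circ k)\in\mb{S}$; therefore $h\circ l\circ k$ witnesses $g\in\bar{\mb{S}}$.

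The main obstacle is organizational rather than technical: the proposition lists only (\ref{eq:iness}) among the hypotheses, yet every proof above needs the insertion principle, exactly as Lemmas \ref{lem:comp2} and \ref{lem:2-3} do. I do not see how to derive insertion from (\ref{eq:iness}) together with the submonoid property alone --- in a non-commutative monoid lacking cancellation, the desired reshuffling $a\circ b, c\in\mb{S}\rightsquigarrow a\circ c\circ b\in\mb{S}$ has no evident combinatorial source. The cleanest way to close this gap is to promote the insertion principle to an explicit hypothesis on the pair $(\mb{M},\mb{S})$, after which both assertions reduce to the short formal verifications given above.
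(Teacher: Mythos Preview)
Your argument is correct and mirrors exactly what the paper intends: it gives no separate proof but simply writes ``Using the same formal arguments as in Lemma \ref{lem:comp2} and Lemma \ref{lem:2-3}, one proves\dots'', and your three paragraphs are precisely those formal arguments transcribed into monoid language, with the same witnesses and the same appeals to composition, insertion, and (\ref{eq:iness}).

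Your closing observation is also well taken and worth recording: the referenced Lemmas \ref{lem:comp2} and \ref{lem:2-3} both invoke Lemma \ref{lem:insert}, whose proof uses the d-homotopy structure (whiskering a psp homotopy by $g$ and $f$) and is \emph{not} a consequence of the submonoid axiom together with (\ref{eq:iness}) alone. So the proposition, read literally with only (\ref{eq:iness}) as hypothesis, is under-specified; the insertion principle $a\circ b,\,c\in\mb{S}\Rightarrow a\circ c\circ b\in\mb{S}$ must be added to the abstract axioms on $(\mb{M},\mb{S})$ for the ``same formal arguments'' to go through. The paper tacitly assumes this, and your proposal makes the hidden hypothesis explicit.
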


\section{Directed homotopy equivalences}\label{s:dhe}
\subsection{Definitions and Examples}\label{ss:defex}
\begin{definition}\label{def:dhe}
A d-map $f:X\to Y$ is called a (coherent) $\alpha$ directed homotopy equivalence if there exists a d-map $g:Y\to X$ such that $g\circ f: X\to X$ and $f\circ g: Y\to Y$ are (coherently) $\alpha$-rather inessential.
\end{definition}

In particluar, a directed homotopy equivalence is an ordinary homotopy equivalence. Moreover, an $\alpha$-directed homotopy equivalence preserves trace spaces ``up to inessentials'':
\begin{lemma}\label{lem:F-eq}
Let $f: X\to Y$ denote an $\alpha$-directed homotopy equivalence. Then there exists an inessential d-map $h: X\to X$ such that $\vec{T}(f): \vec{T}(X)_{hx_1}^{hx_2}\to\vec{T}(Y)_{fhx_1}^{fhx_2}$ is a weak homotopy equivalence for all $(x_1,x_2)\in\vec{X}^2$ (``$f$ is psp on the image of $h$'') 
\end{lemma}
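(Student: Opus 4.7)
The strategy is to unpack the definition, use Lemma~\ref{lem:insert} to combine the witnesses of rather-inessentialness on the $X$- and $Y$-sides into a single inessential endo-map on $X$, and then apply the 2-out-of-6 property of weak homotopy equivalences to a four-term chain of restricted trace-space maps, in the spirit of the 2-out-of-6 argument in Lemma~\ref{lem:psp2}.

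Since $f$ is an $\alpha$-directed homotopy equivalence, there exists $g:Y\to X$ such that $g\circ f$ and $f\circ g$ are rather $\alpha$-inessential. Unpacking Definition~\ref{def:ri}, pick an inessential $k:X\to X$ with $g\circ f\circ k$ inessential on $X$, and an inessential $l:Y\to Y$ with $f\circ g\circ l$ inessential on $Y$. Regarding $f\circ k:X\to Y$ as a single d-map, Lemma~\ref{lem:insert} applied to the inessential $g\circ(f\circ k)$ on $X$ together with the inessential $l$ on $Y$ yields that $g\circ l\circ f\circ k:X\to X$ is inessential. Set $h:=k$.

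For each pair $(x_1,x_2)\in\vec{X}^2$, form the four-term chain
\[
\vec{T}(X)_{hx_1}^{hx_2}\xrightarrow{\vec{T}(f)}
\vec{T}(Y)_{fhx_1}^{fhx_2}\xrightarrow{\vec{T}(g\circ l)}
\vec{T}(X)_{glfhx_1}^{glfhx_2}\xrightarrow{\vec{T}(f)}
\vec{T}(Y)_{fglfhx_1}^{fglfhx_2}.
\]
Its first two-fold composition equals $\vec{T}(g\circ l\circ f)$ on $\vec{T}(X)_{hx_1}^{hx_2}$, and is a weak homotopy equivalence because the factorisation $\vec{T}(g\circ l\circ f\circ k)_{x_1}^{x_2}=\vec{T}(g\circ l\circ f)_{kx_1}^{kx_2}\circ \vec{T}(k)_{x_1}^{x_2}$ together with the inessentiality of $g\circ l\circ f\circ k$ and of $k$ lets the 2-out-of-3 property conclude. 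Its second two-fold composition equals $\vec{T}(f\circ g\circ l)$ on $\vec{T}(Y)_{fhx_1}^{fhx_2}$, and is a weak homotopy equivalence because $f\circ g\circ l$ is inessential, hence psp on every fiber of $\vec{e}_Y$. The 2-out-of-6 property of weak homotopy equivalences then implies that each individual arrow of the chain is a weak equivalence; in particular $\vec{T}(f):\vec{T}(X)_{hx_1}^{hx_2}\to\vec{T}(Y)_{fhx_1}^{fhx_2}$ is, as required.

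The main obstacle is securing the second two-fold composition. The naive three-term chain $\vec{T}(X)\to\vec{T}(Y)\to\vec{T}(X)$ using only $k$ delivers $\vec{T}(g\circ f)$ as a weak equivalence on fibers over $k$-images, but gives no direct handle on $\vec{T}(f\circ g)$ on the fibers $\vec{T}(Y)_{fhx_1}^{fhx_2}$, since the image of $f\circ k$ need not sit inside the image of $l$. Inserting $\vec{T}(l)$ into the chain, so that the second composition becomes $\vec{T}(f\circ g\circ l)$ (which is psp on every fiber), is precisely what allows both witnesses $k$ and $l$ to interact; this is why the bilateral rather-inessentialness hypothesis -- on both $g\circ f$ and $f\circ g$ -- is used.
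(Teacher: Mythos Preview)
Your proof is correct and follows essentially the same strategy as the paper's: use Lemma~\ref{lem:insert} to manufacture an inessential composite on $X$ that involves both witnesses $k$ and $l$, then run a 2-out-of-6 argument on trace spaces. The paper packages the 2-out-of-6 step differently: it applies Lemma~\ref{lem:insert} \emph{twice} to show that both $(g\circ h_Y)\circ(f\circ h_X)$ and $(f\circ h_X)\circ(g\circ h_Y)$ are inessential, and then invokes Lemma~\ref{lem:psp2} to conclude that $f\circ h_X$ is psp. Your version is slightly more economical: by arranging the chain so that the second two-fold composition is $\vec{T}(f\circ g\circ l)$, which is psp directly by hypothesis, you need only one application of Lemma~\ref{lem:insert}. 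The trade-off is that the paper's route yields the extra information that $g\circ h_Y$ is psp as well, which you do not obtain (though it is not needed for the stated conclusion).
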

\begin{proof}
By definition, there exist inessential maps $h_X: X\to X$ and $h_Y: Y\to Y$ such that $g\circ f\circ h_X$ and $f\circ g\circ h_Y$ are inessential. By Lemma \ref{lem:insert}, the maps $
g\circ h_Y\circ (f\circ h_X)$ and $f\circ h_X\circ (g\circ h_Y)$ are inessential, as well. Apply Lemma \ref{lem:psp2} to the maps $f\circ h_X$ and $g\circ h_Y$ to conclude that the map $f\circ h_X$ is psp. Since $h_X$ is psp, the maps induced by $f$ on trace spaces in the image of $h_X$ are weak homotopy equivalences.
\end{proof}

\begin{example}\label{ex:wedge2}
\begin{enumerate}
\item Consider the branching space $B$ from Example \ref{ex:wedge}, and let us see that the inclusion $i: O\to B$ of the one-point d-space $O$ consisting of the origin is \emph{not} a $+$-equivalence: Let $c: B\to O$ denote the unique map. For any ($+$-inessential) map $h: B\to B$, the composition $i\circ c\circ h=i\circ c$, and $i\circ c=c_O: B\to B$ maps every element to $O$. We have seen in Example \ref{ex:wedge} that this map is \emph{not} future d-homotopic to the identity map on $B$.\\
Remark that the map $i$ is a past (and thus a neutral) directed homotopy equivalence, even coherently so: The map $c\circ i: O\to O$ is the identity map. Let $\sigma_O$ denote the constant (and unique) d-path at $O$. Then $\vec{T}(c_O): \vec{T}(B)\to\vec{c_O^2}\vec{T}(B)=\vec{B^2}\times \{\sigma_O\}$ is a fibre  homotopy equivalence over $\vec{B}^2$. The fibre inverse maps $((x,y);\sigma_O)\in \vec{B^2}\times \{ \sigma_O\}$ to the unique trace from $x$ to $y$ in $B$.  
\item Let $W$ denote the geometric realization of the directed graph (``the letter W'')
\[\xymatrix{\cdot_A & \cdot_B\ar[l]\ar[r] & \cdot_C & \cdot_D\ar[l]\ar[r] & \cdot_E}.\] 
All non-empty path and trace spaces are contractible, and the unique d-map $c:W\to O$ to the one point/one path d-space $O$ is coherently psp (cf Definition \ref{def:cpsp}).
To check, whether $W$ is dicontractible (ie $0$-directed homotopy equivalent to $O$), let $i: O\to W$ and $h: W\to W$ denote arbitrary d-maps. The composite map $i\circ c\circ h = i\circ c: W\to W$ is constant and thus not (neutrally) d-homotopic to $id_W$ since any dihomotopy $H$ with $H_0=id_W$ will fix the branch points $B, C$ and $D$.
\item \begin{minipage}{0.75\linewidth}Let us compare the spaces $X=\partial\Box_2$ and $Y=\Box_2\setminus ]\mb{j_1},\mb{j_2}[$ with $\mb{j_1}=(\frac{1}{3},\frac{1}{3})$ and $\mb{j_2}=(\frac{2}{3},\frac{2}{3})$; the latter space occurs as a simple model for mutual exclusion between two processes  in concurrency theory. 
\end{minipage}\hfill
\begin{minipage}{0.2\linewidth}
\begin{tikzpicture}
\draw[thick] (0,0) -- (2.1,0) -- (2.1,2.1) -- (0,2.1) -- (0,0);
\draw[fill] (0.7,0.7) -- (1.4,0.7) -- (1.4,1.4) -- (0.7,1.4) -- (0.7,0.7);
\node at (2.55,1.05) {$\leftarrow X$};
\node at (0.35,1.05) {$Y$};
\node at (0.65,0.6) {\tiny $\mb{j}_1$};
\node at (1.5,1.5) {\tiny $\mb{j}_2$};
\draw[red,dashed] (0,0.7) -- (0.7,0.7) -- (0.7,0);
\draw[red,dashed] (1.4,2.1) -- (1.4,1.4) -- (2.1,1.4);
\node at (1,0.35){\tiny $\leftarrow Y_1$};
\node at (1.1,1.75){\tiny $Y_2\to$};
\end{tikzpicture}
\end{minipage}

\noindent Assume $f: X\to Y$ is an $\alpha$ directed homotopy equivalence with ``homotopy inverse'' $g: Y\to X$. By Lemma \ref{lem:F-eq}, 
$f(\mb{0})\in [\mb{0},\mb{j_1}]$ and $f(\mb{1})\in [\mb{j_2},\mb{1}]$, whereas $f(\partial\Box_2\setminus\{\mb{0},\mb{1}\})\subseteq\Box_2\setminus ([\mb{0},\mb{j_1}]\cup [\mb{j_2},\mb{1}])$. Since $f$ is continuous, we conclude that $f(\mb{0})\in Y_1:=\{(x,y)\in\Box_2|\; x=\frac{1}{3}\mbox{ or }y=\frac{1}{3}\}$ and $f(\mb{1})\in Y_2:=\{(x,y)\in\Box_2|\;x=\frac{2}{3}\mbox{ or }y=\frac{2}{3}\}$. Similarly, $g([\mb{0},\mb{j_1}])=\{ \mb{0}\}$ and $g([\mb{j_2},\mb{1}])=\{ \mb{1}\}$. In particular, $f\circ g$ maps $[\mb{0},\mb{j_1}]$ into $Y_1$ and $[\mb{j_2},\mb{1}]$ into $Y_2$. This map can therefore neither be future- nor past-d-homotopic to the identity map.

Otherwise for $\alpha=0$: The map $f=f_1\times f_1: X\to Y, f_1(x)=\frac{x+1}{3}$, is a \emph{neutral} directed homotopy equivalence with homotopy inverse $g=g_1\times g_1: Y\to X,\; g_1(y)=\begin{cases} 0 & y\le \frac{1}{3}\\ 3(y-\frac{1}{3}) & \frac{1}{3}\le y\le\frac{2}{3}\\ 1 & y\ge\frac{2}{3}\end{cases}$.
To check coherence, observe that $g\circ f=id_X$ whereas\\ $f_1\circ g_1(y)=\begin{cases} \frac{1}{3}& y\le \frac{1}{3}\\ y & \frac{1}{3}\le y\le\frac{2}{3}\\ \frac{2}{3} & y\ge\frac{2}{3}\end{cases}$,
and $f\circ g$ contracts $Y$ to the inner hollow square. 

The trace space $\vec{T}(Y)_{fg\mb{x}}^{fg\mb{y}}$ connecting two points in that hollow square has two elements if and only if $\mb{x}\in [\mb{0},\mb{j_0}]$ and $\mb{y}\in [\mb{j_1},\mb{1}]$ and one element else. A fibre homotopy inverse to $\vec{T}(Y)\to (\vec{f^2}\circ \vec{g^2})^*\vec{TY}$ associates to such a trace the trace connecting $\mb{x}$ with $\mb{y}$ consisting of a horizontal and a vertical path; the order (first horizontal, then vertical or the reverse) follows the order of the trace connecting $fg\mb{x}$ and $fg\mb{y}$.

\item The directed circle $\vec{S}^1$ is the pre-cubical set with one $0$-cell and one $1$-cell. Its geometric realization $|\vec{S}^1|$ is a circle on which directed paths proceed counter-clockwise, ie., they are images of non-decreasing paths under the universal covering $\exp : \vec{\mb{R}}\to |\vec{S}^1|$. The directed $n$-torus $\vec{T}^n=(\vec{S}^1)^n,\; n\in\mb{N},$ arises as product of $n$ directed circles.

It is shown in \cite[Section 4.4]{Raussen:18} that only directed \emph{homeomorphisms} on $\vec{S}^1$ are inessential; furthermore, the inessential maps on a directed torus are necessarily products of directed homeomorphisms on each factor. In particular, these inessential maps are bijections; hence, rather inessential maps are necessarily inessential.  As a result, only very few maps are directed self-homotopy equivalences: On a directed circle, these are only the directed homeomorphisms, and on a torus, these are only products of such.
\end{enumerate}
\end{example}

\begin{definition}\label{def:dhet}
Two d-spaces $X$ and $Y$ are called (coherently) $\alpha$-equivalent if there is a (coherent) $\alpha$ directed homotopy equivalence $f:X\to Y$.
\end{definition}

\begin{remark}\label{rem:GFS}
\begin{enumerate}
\item It follows from Definition \ref{def:dhe} and from Lemma \ref{lem:comp2} that (coherent) $\alpha$-equivalence is an equivalence relation between d-spaces.
\item For two $\alpha$-equivalent d-spaces $X$ and $Y$, there exist in fact d-maps $\bar{f}: X\to Y$ and $\bar{g}: Y\to X$ such that $\bar{g}\circ\bar{f}: X\to X$ and $\bar{f}\circ\bar{g}: Y\to Y$ are (coherently) $\alpha$-inessential (not just rather inessential). Here is why: If there are d-maps $f: X\to Y, g: Y\to X,$ and inessential d-maps $h: X\to X$ and $k: Y\to Y$ such that $g\circ f\circ h: X\to X$ and $f\circ g\circ k: Y\to Y$ are $\alpha$-inessential, then replace $f$ by $\bar{f}=f\circ h$ and $g$ by $\bar{g}=g\circ k$. The map $\bar{g}\circ \bar{f} = g\circ k\circ (f\circ h): X\to X$ is $\alpha$-inessential according to Lemma \ref{lem:insert}. Likewise $\bar{f}\circ\bar{g} : Y\to Y$.

We need to come back to the weaker notion from Definition \ref{def:dhe} in Section \ref{ss:2-3} in order to establish a 2-out-of-3 property for \emph{individual} $\alpha$-directed homotopy equivalences, cf Proposition \ref{prop:2-3}.
\item Goubault \cite{Goubault:17} and Goubault, Farber and Sagnier \cite{GFS:18} propose the following requirements to a d-map $f:X\to Y$ to qualify as a directed homotopy equivalence:
\begin{enumerate}
\item $f$ is an ordinary homotopy equivalence with a homotopy inverse d-map $g: Y\to X$.
\item Both $\vec{T}(f):\vec{T}(X)\to (\vec{f^2})^*\vec{T}(Y)$ and $\vec{T}(g):\vec{T}(Y)\to (\vec{g^2})^*\vec{T}(X)$ are fibre homotopy equivalences.
\end{enumerate}
(b) is a reformulation of the original wording in \cite{Goubault:17} and \cite{GFS:18}. Remark that this definition has both weaker and stronger assumptions compared to our definition. Weaker since it is not assumed that the compositions $g\circ f$, resp.\ $f\circ g$ are \emph{d}-homotopic to the identity maps (in any flavour). Stronger since no compositions with inessential maps are used.  

With their definition, $i: O\to B$ from Example \ref{ex:wedge2}(1) is always a directed homotopy equivalence with homotopy inverse $c$. Even $i: O\to W$ from Example \ref{ex:wedge2}(2) does then qualify as a directed homotopy equivalence; coherence (ie (b) above) is established as in Example \ref{ex:wedge2}(2). 
\end{enumerate}
\end{remark}

\subsection{A 2-out-of-3 property}\label{ss:2-3}
Previous attempts to define directed homotopy equivalences failed to establish the 2-out-of-3 property among morphisms (d-maps) in the category $\mathbf{dTop}$; cf Definition \ref{def:d}(4). Proposition \ref{prop:2-3} below shows that Definition \ref{def:dhe} is suitably weak to establish such a property.

We will need the following generalization of Lemma \ref{lem:insert} (for all flavours $\alpha$):
\begin{lemma}\label{lem:insert2}
Let $f: X\to Y$ and $g: Y\to X$ denote d-maps such that $g\circ f: X\to X$ and $f\circ g: Y\to Y$ are rather inessential. Then, for every rather inessential map $l: Y\to Y$, the composition $g\circ l\circ f$ is rather inessential.
\end{lemma}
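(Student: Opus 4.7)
My plan is to show $g\circ l\circ f$ rather $\alpha$-inessential by exhibiting a single inessential endo-d-map $v: X\to X$ for which $g\circ l\circ f\circ v$ is $\alpha$-inessential, which is exactly what Definition \ref{def:ri} requires. Using the three rather-inessential hypotheses, I first pick inessential endo-d-maps $h_X: X\to X$, $h_Y: Y\to Y$, and $k: Y\to Y$ such that the composites $g\circ f\circ h_X$, $f\circ g\circ h_Y$, and $l\circ k$ are all $\alpha$-inessential. The entire proof will then be a cascade of three applications of Lemma \ref{lem:insert}, arranged around these three chosen inessentials. The main obstacle is guessing the right $v$: the middle-insertion shape of Lemma \ref{lem:insert} forces $v$ to involve $g$ and $f$ as well as all three chosen inessentials.

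Concretely, I propose $v := g\circ h_Y\circ k\circ f\circ h_X$. To verify that $v$ is $\alpha$-inessential I apply Lemma \ref{lem:insert} to the triple $(f\circ h_X,\, g,\, h_Y\circ k)$: the composite $g\circ(f\circ h_X) = g\circ f\circ h_X$ is $\alpha$-inessential by hypothesis, while $h_Y\circ k$ is $\alpha$-inessential by Lemma \ref{lem:comp}. Lemma \ref{lem:insert} then yields $g\circ(h_Y\circ k)\circ(f\circ h_X) = v$ $\alpha$-inessential.

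To conclude, I factor $g\circ l\circ f\circ v = g\circ(l\circ f\circ g\circ h_Y\circ k)\circ(f\circ h_X)$ and apply Lemma \ref{lem:insert} a second time to the triple $(f\circ h_X,\, g,\, M)$ with $M := l\circ f\circ g\circ h_Y\circ k$, reducing the problem to showing $M$ $\alpha$-inessential on $Y$. This follows from one more invocation of Lemma \ref{lem:insert}, now in the endo-$Y$ setting (take $X=Y$ in that lemma): with $f' = k$, $g' = l$, so $g'\circ f' = l\circ k$ is $\alpha$-inessential, and with the middle inessential map $h' = f\circ g\circ h_Y$, Lemma \ref{lem:insert} gives $l\circ(f\circ g\circ h_Y)\circ k = M$ $\alpha$-inessential. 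Chaining these three applications of Lemma \ref{lem:insert} produces $g\circ l\circ f\circ v$ $\alpha$-inessential, so $g\circ l\circ f$ is rather $\alpha$-inessential as claimed. The coherent case is identical, with each use of Lemma \ref{lem:insert} and Lemma \ref{lem:comp} replaced by its coherent counterpart.
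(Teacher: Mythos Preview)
Your proof is correct and follows essentially the same route as the paper's own argument: modulo renaming ($h_X\leftrightarrow h$, $h_Y\leftrightarrow k$, $k\leftrightarrow m$), your witness $v=g\circ h_Y\circ k\circ f\circ h_X$ is exactly the paper's $g\circ k\circ m\circ f\circ h$, and your three applications of Lemma~\ref{lem:insert} match the paper's step-by-step. Your exposition is slightly more explicit in isolating and naming the intermediate map $M$, but the logical skeleton is identical.
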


\begin{proof}
By assumption, there exist inessential d-maps $h: X\to X, k,m: Y\to Y$ such that $g\circ f\circ h: X\to X$, $f\circ g\circ k: Y\to Y$ and $l\circ m$ are inessential. By Lemma \ref{lem:comp}, $k\circ m: Y\to Y$ is inessential. By Lemma \ref{lem:insert}, $g\circ (km)\circ (fh): X\to X$ is inessential, and so is $l\circ (fgk)\circ m: Y\to Y$. Again by Lemma \ref{lem:insert}, the map $(glf)\circ (gkmfh)=g\circ (lfgkm)\circ (fh)$ is inessential. Hence $g\circ l\circ f$ is rather inessential.
\end{proof}
   
\begin{proposition}\label{prop:2-3}
The $0$-directed homotopy equivalences satisfy the 2-out-of-3 property among morphisms (d-maps) in the category $\mathbf{dTop}$.
\end{proposition}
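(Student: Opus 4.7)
The plan is to establish the 2-out-of-3 property case by case. Given d-maps $f:X\to Y$ and $g:Y\to Z$, there are three implications to verify: from any two of $\{f,\ g,\ g\circ f\}$ being $0$-directed homotopy equivalences, conclude the third. In every case I extract homotopy inverses $f':Y\to X$, $g':Z\to Y$, $h':Z\to X$ provided by the hypotheses (so that the sandwich compositions $f'f,\ ff',\ g'g,\ gg',\ h'(gf),\ (gf)h'$ are rather $0$-inessential endo-d-maps) and construct an explicit candidate inverse for the remaining map.

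The composition case, where both $f$ and $g$ are $0$-d.h.e., is handled directly: take $f'\circ g'$ as candidate inverse of $g\circ f$. Then
\[(gf)(f'g')=g\circ(ff')\circ g'\quad\text{and}\quad (f'g')(gf)=f'\circ(g'g)\circ f,\]
and both are rather $0$-inessential by Lemma \ref{lem:insert2} applied to the pair $(g,g')$ inserting $ff'$, respectively to the pair $(f,f')$ inserting $g'g$.

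The second case has $f$ and $h:=g\circ f$ as $0$-d.h.e.\ with $g$ unknown. Set $g':=f\circ h'$; then $g\circ g'=h\circ h'$ is rather $0$-inessential by hypothesis. For the other composition I sandwich into an endo-d-map on $X$:
\[f'\circ(g'\circ g)\circ f\;=\;(f'f)\circ(h'h),\]
rather $0$-inessential by Lemma \ref{lem:comp2}. Applying Lemma \ref{lem:insert2} with the roles of $X$ and $Y$ in its statement swapped (valid, since its hypothesis is symmetric in the two maps) to the pair $(f',f)$, inserting the rather inessential endo-map $f'(g'g)f$ on $X$, produces
\[f\circ\bigl[f'(g'g)f\bigr]\circ f'\;=\;(ff')\circ(g'g)\circ(ff')\]
rather $0$-inessential on $Y$. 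Two applications of Lemma \ref{lem:2-3} now strip the outer factors: part (1), applied to $(ff')\circ\bigl[(g'g)(ff')\bigr]$ with $ff'$ rather $0$-inessential, gives that $(g'g)(ff')$ is rather $0$-inessential; then part (2), applied to $(g'g)\circ(ff')$ with $ff'$ rather $0$-inessential, gives that $g'\circ g$ is rather $0$-inessential, as needed.

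The remaining case, where $g$ and $h=g\circ f$ are $0$-d.h.e.\ and $f$ is unknown, is handled symmetrically: set $f':=h'\circ g$, so $f'\circ f=h'\circ h$ is immediately rather $0$-inessential; then sandwich $f\circ f'$ on $Y$ into $Z$ via $g$ and $g'$, giving $g\circ(ff')\circ g'=(hh')(gg')$ rather $0$-inessential, apply Lemma \ref{lem:insert2} with roles swapped to the pair $(g,g')$ to obtain $(g'g)(ff')(g'g)$ rather $0$-inessential on $Y$, and strip the outer $g'g$ factors by two applications of Lemma \ref{lem:2-3}. The principal technical obstacle in the two non-trivial cases is that neither $f$ nor $g$ is itself an endo-d-map, so no direct cancellation is available; it is the combination of Lemma \ref{lem:insert2} (to sandwich the relevant compositions into endo-d-maps on a single space) with Lemma \ref{lem:2-3} (the 2-out-of-3 property for rather $0$-inessential endo-d-maps) that makes the argument go through, which is precisely what the weaker notion of rather inessential was introduced to supply.
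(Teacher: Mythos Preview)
Your proof is correct and follows the same overall strategy as the paper: the same candidate inverses $g'=f\circ h'$ and $f'=h'\circ g$, and the same key tools (Lemma~\ref{lem:insert2} together with Lemma~\ref{lem:2-3}). The composition case is identical to the paper's.

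In the two factorization cases your argument is a bit more roundabout than necessary. Take your Case~2: you first sandwich $g'g$ into $X$ to get $(f'f)(h'h)$, then sandwich back into $Y$ to obtain $(ff')(g'g)(ff')$, and finally strip both outer factors with two uses of Lemma~\ref{lem:2-3}. The paper instead observes directly that
\[
(g'g)\circ(ff')\;=\;f\circ(h'h)\circ f',
\]
which is rather inessential by a \emph{single} application of Lemma~\ref{lem:insert2}; then one application of Lemma~\ref{lem:2-3}(2) yields $g'g$ rather inessential. The same shortcut works in Case~3: $(g'g)\circ(ff')=g'\circ(hh')\circ g$ is rather inessential by Lemma~\ref{lem:insert2}, and Lemma~\ref{lem:2-3}(1) gives $ff'$ rather $0$-inessential. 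So the paper reaches the conclusion with half the steps, but your longer detour is perfectly valid.
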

\begin{proof} We try the 2-out-of 3 property for all flavours $\alpha$. Only for one of the factorization cases it is necessary to assume that $\alpha =0$:
\begin{description}
\item[Composition] Let $f_1: X\to Y, f_2:Y\to Z$ denote $\alpha$-directed homotopy equivalences with ``inverses'' $g_1:Y\to X, g_2:Z\to Y$: Both $g_1\circ f_1$ and $g_2\circ f_2$ are then rather inessential. By Lemma \ref{lem:insert2}, so is their composition
$(g_1\circ g_2)\circ (f_2\circ f_1)=g_1\circ (g_2\circ f_2)\circ f_1$. Similarly for composition in the reverse order. 
 \item[Factorization 1] Let $f_1: X\to Y$ and $f_2: Y\to Z$ be d-maps such that $f_1$ and $f_2\circ f_1$ are $\alpha$-directed homotopy equivalences.
By assumption, there exist reverse maps $g_1: Y\to X$ and $g_{12}: Z\to X$ such that $g_1\circ f_1: Y\to Y$ and $f_1\circ g_1, g_{12}\circ f_2\circ f_1 : X\to X$ and $f_2\circ f_1\circ g_{12}: Z\to Z$ are rather inessential. To check $f_1\circ g_{12}$ as a ``left inverse'' to $f_2$, we consider:
$f_1\circ g_{12}\circ f_2\circ (f_1\circ g_1)=f_1\circ (g_{12}\circ f_2\circ f_1)\circ g_1$ that is rather inessential by Lemma \ref{lem:insert2}. By Lemma \ref{lem:2-3}, the composition $(f_1\circ g_{12})\circ f_2$ is so, as well. 
\item[Factorization 2] $f_2$ and $f_2\circ f_1$ are $\alpha$-directed homotopy equivalences. Hence there exist reverse maps $g_2: Z\to Y$ and $g_{12}: Z\to X$ such that $g_2\circ f_2: Y\to Y$ and $f_2\circ g_2, f_2\circ f_1\circ g_{12}: Z\to Z$ and $g_{12}\circ f_2\circ f_1: X\to X$ are rather inessential. Hence
$(g_2\circ f_2)\circ f_1\circ g_{12}\circ f_2=g_2\circ (f_2\circ f_1\circ g_{12})\circ f_2: Y\to Y$ is rather inessential by Lemma \ref{lem:insert2}. By Lemma \ref{lem:2-3}, $f_1\circ g_{12}\circ f_2$ is rather $0$-inessential. 
\end{description} 
\end{proof}

\begin{remark}
Proposition \ref{prop:2-3} is also valid for \emph{coherent} $0$-directed homotopy equivalences. Every lemma used in the proof has also a coherent version.
\end{remark}

\subsection{Generalizations}
\subsubsection{$\mc{F}$-equivalences}\label{sss:F-eq}
The framework presented in the previous sections can be generalized to compare d-spaces that are equivalent in a weaker sense. The notion of (weak) homotopy equivalence can be replaced by that of an $\mc{F}$-equivalence:

\begin{definition}\cite[Definition 2.5]{Ziemianski:18}\label{def:F}
A familiy $\mc{F}$ of $\mathbf{Top}$-morphisms is called an \emph{equivalence system} if 
\begin{enumerate}
\item $\mc{F}$ is closed under homotopy;
\item $\mc{F}$ satisfies the 2-out-of-3-property;
\item $\mc{F}$ contains all weak homotopy equivalences:
\item An $\mc{F}$-morphism $f$ induces a bijection $\pi_0(f)$ on sets of path components.
\item $\mc{F}$ is closed under finite products and finite sums. A finite sum is contained in $\mc{F}$ if and only if every summand is so.
\end{enumerate}
\end{definition}

Many equivalence systems arise from a functor $\mc{G}: \mathbf{Top}\to \mc{C}_{\mc{F}}$ into a category $\mc{C}_{\mc{F}}$: The family $\mc{F}$ consists then of those morphisms $f$ for which $\mc{G}(f)$ is an isomorphism. This is the case for 
the most important examples of equivalence systems for our purposes that are collected in the list below: 
\begin{description}
\item[$\mc{F}_{\infty}$] the weak homotopy equivalences
\item[$\mc{F}_0$] the family of maps inducing bijections on sets of path components
\item[$\mc{F}_k$] the family of maps inducing bijections on $\pi_0$ and isomorphisms on all homotopy groups $\pi_n$ for every $n\le k$ and every choice of basepoint
\item[$\mc{H}_{A,k}$] ($A$ an abelian group): the family of maps inducing isomorphisms on all homology groups $H_n(-;A)$ for every  $n\le k$.
\end{description}

The whole program in the previous sections can be adapted: We will call a d-map $\mc{F}$-path space preserving 
if $\vec{T}(f)_{x_1}^{x_2}: \vec{T}(X)_{x_1}^{x_2}\to\vec{T}(Y)_{fx_1}^{fx_2}$ is an $\mc{F}$-equivalence for all $(x_1,x_2)\in\vec{X}^2$. This leads to the concept of (coherently) $\alpha\mc{F}$-inessential endo-d-maps (generalising Definition \ref{def:iness}) and (coherent)  $\alpha\mc{F}$-equivalence (generalising Definition \ref{def:dhe}). If the system $\mc{F}$ satisfies the 2-out-of-6 property (as all those in the list above), then all properties of inessential maps and directed homotopy equivalences have counterparts ($\mc{F}$-(rather) inessential maps, resp.\ $\mc{F}$-equivalences) in this framework.

Directed maps induce functors between the ``natural homology'' systems \cite{DGG:15} associated to d-spaces. If  a d-map $f:X\to Y$ is an $\alpha\mc{H}_{A,k}$-equivalence, then these natural homology systems become isomorphisms on the level of homology ``up to $\alpha\mc{H}_{A,k}$-inessential  maps'' on both sides. Details will be developed in Section \ref{s:pcc}.

\subsubsection{General endomorphisms}
The pattern of proof in Lemma \ref{lem:insert2} can be generalized to the following setting: Let $F: A\to B$ and $G:B\to A$ denote morphisms in some given category $\mc{C}$. Let $Endo(A)=Mor (A,A)$ and $Endo(B)=Mor(B,B)$ denote the monoids of endomorphisms. Let $S_A\subseteq Endo (A), S_B\subseteq Endo (B)$ denote submonoids enjoying the inessentiality property (\ref{eq:iness}) and giving rise to closures $S_A\subseteq\bar{S}_A\subseteq Endo(A)$, resp.\ $S_B\subseteq\bar{S}_B\subseteq Endo(B)$, cf Section \ref{ss:mono}. 
Together, the mophisms $F,G$ define maps (not monoid morphisms, in general!) $(F,G)_{\#}: Endo(A)\to Endo (B),\; h\mapsto F\circ h\circ G$ and $(G,F)_{\#}: Endo(B)\to Endo (A),\; k\mapsto G\circ k\circ F$. 
\begin{lemma}\label{lem:insertm}
Suppose $F\circ G=(F,G)_{\#}(id_A)\in\bar{S}_B$ and $G\circ F=(G,F)_{\#}(id_B)\in\bar{S}_A$. Then $(F,G)_{\#}(\bar{S_A})\subseteq\bar{S}_B$.
\end{lemma}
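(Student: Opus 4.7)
The plan is to emulate the proof of Lemma \ref{lem:insert2} in the abstract two-monoid setting, with $F$ and $G$ playing the roles of the d-maps $f$ and $g$ and with every ``inessential'' witness drawn from the appropriate submonoid $S_A$ or $S_B$. First I unpack the three hypotheses: choose $h \in \bar{S}_A$ and pick witnesses $\gamma \in S_A$ with $h \circ \gamma \in S_A$, $\phi \in S_B$ with $F \circ G \circ \phi \in S_B$, and $\psi \in S_A$ with $G \circ F \circ \psi \in S_A$. The aim is to produce some $\delta \in S_B$ with $(F \circ h \circ G) \circ \delta \in S_B$; the plan is to take $\delta := F \circ \psi \circ \gamma \circ G \circ \phi$ and verify that both $\delta$ and $(F \circ h \circ G)\circ \delta$ lie in $S_B$.

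The argument then proceeds via four short insertion moves patterned on Lemma \ref{lem:insert}. First, $\psi \circ \gamma \in S_A$ simply by closure of the submonoid under composition. Second, applying the abstract insertion move to $F \circ G \circ \phi \in S_B$ with $\psi \circ \gamma \in S_A$ inserted between $F$ and $G$ yields $\delta = F \circ (\psi \gamma) \circ G \circ \phi \in S_B$, so $\delta$ is in the right submonoid. Third, a single-monoid insertion inside $S_A$ applied to $h \circ \gamma \in S_A$ with inserted element $G \circ F \circ \psi \in S_A$ gives $h \circ (G F \psi) \circ \gamma \in S_A$. Fourth, a cross-monoid insertion between $F$ and $G$ in $F \circ G \circ \phi \in S_B$, now with the element just produced as the inserted map, yields $F \circ (h \circ G F \psi \circ \gamma) \circ G \circ \phi \in S_B$. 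A rebracketing identifies this as $(F \circ h \circ G) \circ \delta$, which together with $\delta \in S_B$ establishes $F \circ h \circ G \in \bar{S}_B$ by definition of the closure.

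The non-trivial ingredient — which I would flag as the main obstacle — is the abstract insertion principle itself: that if $G \circ F \in S_A$ and $K \in S_B$ then $G \circ K \circ F \in S_A$, and symmetrically after swapping the roles of $F$ and $G$ and of $S_A, S_B$. This does not follow formally from the inessentiality axiom (\ref{eq:iness}) alone; it is the natural abstraction of Lemma \ref{lem:insert} and is already implicit in the formal manipulations underlying Section \ref{ss:mono}. Once that principle is granted, everything else is purely algebraic bookkeeping that mimics Lemma \ref{lem:insert2} step for step, and the coherent version follows in exactly the same way by invoking the coherent form of Lemma \ref{lem:insert} at each insertion.
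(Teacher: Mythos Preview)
Your proof is correct and follows exactly the pattern the paper intends: it is a step-by-step transcription of the proof of Lemma~\ref{lem:insert2} into the abstract two-monoid setting, which is precisely what the paper means by ``the pattern of proof in Lemma~\ref{lem:insert2} can be generalized.'' Your flagged concern is also apt: the cross-monoid insertion principle (the abstract analogue of Lemma~\ref{lem:insert}) is indeed an extra ingredient not derivable from axiom~(\ref{eq:iness}) alone, and the paper tacitly assumes it as part of the ``generalized setting'' rather than stating it explicitly.
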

Let us call a morphism $F:A\to B$ an equivalence up to inessentials if there exists a morphism $G:B\to A$ satisfying the assumptions of Lemma \ref{lem:insertm}. Then we can prove, analogously to Proposition \ref{prop:2-3}:
\begin{proposition}\label{prop:2-3m}
The property ``equivalence up to inessentials'' among $\mc{C}$-morphisms is a 2-out-of-3-property.
\end{proposition}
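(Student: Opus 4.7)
The plan is to imitate the proof of Proposition \ref{prop:2-3} in the abstract categorical setting, reading Lemma \ref{lem:insertm} as the analogue of Lemma \ref{lem:insert2} and reading the 2-out-of-3 property of the closures $\bar{S}_A$ (delivered by the Proposition that immediately precedes the definition of ``equivalence up to inessentials'') as the analogue of Lemma \ref{lem:2-3}. All that changes between the two proofs is notation; no new conceptual ingredient is needed.

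For the composition case, I pick $\mc{C}$-equivalences $F_1:A\to B$ and $F_2:B\to C$ together with chosen reverses $G_1:B\to A$ and $G_2:C\to B$, and propose $G_1\circ G_2$ as a reverse for $F_2\circ F_1$. The composition $(F_2\circ F_1)\circ(G_1\circ G_2)=F_2\circ(F_1\circ G_1)\circ G_2 = (F_2,G_2)_{\#}(F_1\circ G_1)$ lies in $\bar{S}_C$ by Lemma \ref{lem:insertm}, since $F_1\circ G_1\in\bar{S}_B$ and since $(F_2,G_2)$ already satisfies the hypotheses of that lemma. The opposite composition is treated symmetrically via $(G_1,F_1)_{\#}(G_2\circ F_2)\in\bar{S}_A$.

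For the first factorization case (``$F_1$ and $F_2\circ F_1$ are equivalences, conclude $F_2$ is''), let $G_1:B\to A$ and $G_{12}:C\to A$ be the chosen reverses and propose $F_1\circ G_{12}:C\to B$ as a reverse for $F_2$. One composition is immediate: $F_2\circ(F_1\circ G_{12})=(F_2\circ F_1)\circ G_{12}\in\bar{S}_C$. For the other, I compose on the right by $F_1\circ G_1\in\bar{S}_B$, obtaining $((F_1\circ G_{12})\circ F_2)\circ (F_1\circ G_1)=F_1\circ(G_{12}\circ F_2\circ F_1)\circ G_1=(F_1,G_1)_{\#}(G_{12}\circ F_2\circ F_1)$, which is in $\bar{S}_B$ by Lemma \ref{lem:insertm} because $G_{12}\circ F_2\circ F_1\in\bar{S}_A$. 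Since $F_1\circ G_1\in\bar{S}_B$, the 2-out-of-3 property of $\bar{S}_B$ then yields $(F_1\circ G_{12})\circ F_2\in\bar{S}_B$. The second factorization case is dual: with chosen reverses $G_2:C\to B$ and $G_{12}:C\to A$, I take $G_{12}\circ F_2:B\to A$ as reverse for $F_1$. Here $(G_{12}\circ F_2)\circ F_1=G_{12}\circ(F_2\circ F_1)\in\bar{S}_A$ is free, and for $F_1\circ G_{12}\circ F_2$ I multiply on the left by $G_2\circ F_2\in\bar{S}_B$ and recognise $G_2\circ(F_2\circ F_1\circ G_{12})\circ F_2=(G_2,F_2)_{\#}(F_2\circ F_1\circ G_{12})\in\bar{S}_B$, then invoke 2-out-of-3 in $\bar{S}_B$ again.

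The only genuine obstacle I anticipate is bookkeeping: arranging each required composition in the ``$F\circ h\circ G$'' shape demanded by Lemma \ref{lem:insertm}, and checking that the element $h$ inserted is already known to lie in the appropriate closure. Once the abstract versions of the two ``insertion'' and ``2-out-of-3 for $\bar{S}$'' lemmas are in place, the six verifications above reduce to associativity rewritings, exactly as in Proposition \ref{prop:2-3}.
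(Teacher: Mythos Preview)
Your proof is correct and follows precisely the approach the paper indicates: it translates the proof of Proposition~\ref{prop:2-3} into the abstract setting, using Lemma~\ref{lem:insertm} in place of Lemma~\ref{lem:insert2} and the 2-out-of-3 property of $\bar{S}_B$ (from the preceding proposition) in place of Lemma~\ref{lem:2-3}. The paper itself gives no details beyond ``analogously to Proposition~\ref{prop:2-3}'', and your writeup makes that analogy explicit case by case, with the associativity rewritings into $(F,G)_\#$-shape handled correctly throughout.
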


\section{Directed homotopy equivalences and directed topological complexity}\label{s:dTC}
\subsection{(Directed) topological complexity}
Michael Farber \cite{Farber:03} introduced the notion ``topological complexity'' $T(X)$ of a topological space $X$ to address the motion planning problem: Given a topological space $X$, find the minimal number of subspaces $Y_i$ covering $X\times X$ such that there exists a continuous section of the path space fibration $e_X: X^I\to X\times X$ over each $Y_i$; technically speaking, this is the Schwartz genus of the path space fibration. This notion has been extensively studied, and the topological complexity has been determined or at least estimated for many spaces of interest in the literature.

Recently, Goubault, Farber, and Sagnier \cite{GFS:18} have proposed the following definition for directed topological complexity (adapted from one of the customary definitions of $TC(X)$ to the directed framework, cited here using the notations from the present paper) for a d-space $(X, \vec{P}(X))$ such that $X$ is a Euclidean Neighbourhood Retract (ENR):

\begin{definition}\cite[Definition 3]{GFS:18}
The \emph{directed topological complexity} $\overrightarrow{TC}(X,\vec{P}(X))$ of a d-space $(X,\vec{P}(X))$ is the minimal number $n\in\mb{N}$ (if existing) such that there is a (not-necessarily continuous) section $s$ of the fibre map $\vec{e}_X: \vec{T}(X)\to \vec{X}^2$ and $\vec{X}^2=\bigsqcup_1^nX_i$ over mutually disjoint subspaces $X_i\subseteq \vec{X}^2$ and such that $s|_{X_i}$ is continuous.
\end{definition}

The authors then calculate the directed topological complexity of some important d-spaces, but it seems fair to say that the theory is not well-developed yet in the directed case. The crucial complication compared with the non-directed case stems from the fact that the end-point map $\vec{e}_X: \vec{T}(X)\to \vec{X}^2$ is, in most cases, not a fibration: In general, the homotopy types of fibres varies with the end points; d-spaces are not homogeneous! 

The following recent result by Borat and Grant \cite{BG:18} concerning the ``directed spheres'' $\partial\Box_n$, the boundaries of unit cubes, came as a surprise: Whereas the (ordinary) topological complexity of spheres $S^n$ is $2$ for $n$ odd and $3$ for $n$ even, they show:

\begin{proposition}\cite{BG:18}
Directed topological complexity satisfies: $\overrightarrow{TC}(\partial\Box_n)=2$ for all $n\ge 2$.
\end{proposition}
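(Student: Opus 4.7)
The plan is to establish both inequalities $\overrightarrow{TC}(\partial\Box_n)\ge 2$ and $\overrightarrow{TC}(\partial\Box_n)\le 2$.

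For the lower bound, I would assume for contradiction that there is a single continuous section $s:\vec{X^2}\to\vec{T}(\partial\Box_n)$ of $\vec{e}_X$ and derive a discontinuity at the pair $(\mb{0},\mb{1})$. By Example \ref{ex:sphere}, the trace space $\vec{T}(\partial\Box_n)_{\mb{0}}^{\mb{1}}\simeq S^{n-2}$ is topologically non-trivial (disconnected for $n=2$, of positive dimension for $n\ge 3$). For each $i\in\{1,\dots,n\}$ consider the sequence $\mb{y}^{(i)}_\epsilon\in\partial\Box_n$ with $y^{(i)}_i=1-\epsilon$ and $y^{(i)}_j=1$ for $j\neq i$; as $\epsilon\to 0$, $\mb{y}^{(i)}_\epsilon\to\mb{1}$. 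Each trace space $\vec{T}(\partial\Box_n)_{\mb{0}}^{\mb{y}^{(i)}_\epsilon}$ is contractible (again by Example \ref{ex:sphere}, since the two index sets $\{j : 0_j=0\}$ and $\{j : y^{(i)}_{\epsilon,j}=1\}$ are not equal), and any representative d-path is forced, up to reparametrisation, to reach the terminal face in an essentially unique way. The limiting traces $\lim_{\epsilon\to 0}s(\mb{0},\mb{y}^{(i)}_\epsilon)$ for different choices of $i$ lie in distinct components of $S^{n-2}$ (for $n=2$) or correspond to essentially different points (for $n\ge 3$), contradicting continuity of $s$ at $(\mb{0},\mb{1})$.

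For the upper bound, I would exhibit an explicit partition $\vec{X^2}=X_1\sqcup X_2$ with a continuous section on each piece. Set
\[X_1:=\{(\mb{x},\mb{y})\in\vec{X^2}\;|\; \exists\, i\text{ with } x_i=y_i\in\{0,1\}\},\]
i.e., pairs lying on a common boundary face. On $X_1$ the straight-line interpolation $t\mapsto (1-t)\mb{x}+t\mb{y}$ remains in $\partial\Box_n$ (since a shared $0$ or $1$ coordinate is preserved) and yields a continuous directed section. On $X_2:=\vec{X^2}\setminus X_1$, each pair $(\mb{x},\mb{y})$ must have some coordinate $i$ with $x_i=0<y_i$ and some coordinate $j$ with $x_j<1=y_j$; I would define the section by an L-shaped two-segment path that first travels from $\mb{x}$ along the face $x_i=0$ to an intermediate point, then along the face $x_j=1$ to $\mb{y}$, with the indices $i,j$ chosen by, say, minimal lexicographic rule and the intermediate coordinates picked as suitable minima/maxima.

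The hard part will be verifying continuity of the section on $X_2$, since the combinatorial rule choosing the indices $i,j$ is a priori a discontinuous function of $(\mb{x},\mb{y})$. The key observation is that at any point where the index $i$ (resp.\ $j$) would jump, either both candidate coordinates satisfy $x_i=0$ (resp.\ $y_j=1$) simultaneously, so the resulting L-shaped paths can be made to agree, or the pair crosses into the boundary of $X_2$, which is contained in $X_1$ and thus not relevant for continuity within $X_2$. One should further verify that the lower bound argument cannot be adapted to exclude continuous sections on $X_2$, which is precisely because $X_2$ avoids the "ambiguity locus" pairs like $(\mb{0},\mb{1})$ that prevent a single global section.
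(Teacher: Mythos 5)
The paper itself does not prove this proposition; it cites the result from Borat--Grant \cite{BG:18}, so your attempt must be measured against that proof, and both halves of your argument have genuine gaps. For the lower bound, your ``distinct limits'' mechanism works only when $n=2$: there the two limit traces lie in different components of $\vec{T}(\partial\Box_2)_{\mb{0}}^{\mb{1}}\simeq S^0$, which indeed contradicts continuity. But for $n\ge 3$ the fibre $S^{n-2}$ is path-connected, and nothing forces the limits $L_i$ along your finitely many sequences $\mb{y}^{(i)}_\epsilon$ to be distinct --- they could all equal $s(\mb{0},\mb{1})$; ``correspond to essentially different points'' is an assertion, not an argument. The actual obstruction is homotopy-theoretic, not point-set: one standard route (essentially Borat--Grant's) is to observe that $\partial\Box_n$ has no non-constant directed loops, so any global continuous section $s$ satisfies that $s(x,x)$ is the constant trace, and then $(\gamma,t)\mapsto s\bigl(\gamma(0),\gamma(t)\bigr)\ast\gamma|_{[t,1]}$ contracts every trace space; this contradicts $\vec{T}(\partial\Box_n)_{\mb{0}}^{\mb{1}}\simeq S^{n-2}$ being non-contractible for all $n\ge 2$.

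The upper bound is where the proposal actually breaks. First, your closing claim that $X_2$ avoids the ambiguity pair $(\mb{0},\mb{1})$ is false: $\mb{0}$ and $\mb{1}$ share no coordinate value at all, so $(\mb{0},\mb{1})\in X_2$, not $X_1$. Worse, \emph{no} section on your $X_2$ can be continuous there, by exactly the mechanism of your own lower bound: already for $n=2$, the pairs $(\mb{0},(1,b))$ and $(\mb{0},(b,1))$ with $b<1$ lie in $X_2$ and converge to $(\mb{0},\mb{1})$, yet their fibres are single points whose traces pass through $(1,0)$ resp.\ $(0,1)$, i.e.\ the limits land in the two different components of $\vec{T}(\partial\Box_2)_{\mb{0}}^{\mb{1}}\simeq S^0$. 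So the difficulty is not the lexicographic choice of indices $i,j$ --- it is intrinsic to the cell, and no choice rule can repair it. The correct decomposition must be organized by the fibre topology from Example \ref{ex:sphere}: separate the locus where $\{i\,|\,x_i=0\}=\{i\,|\,y_i=1\}$ (spherical, non-contractible fibres, including $(\mb{0},\mb{1})$) from its complement (contractible fibres); on the spherical locus the index set is locally constant so a canonical trace can be chosen per piece, while the genuinely hard work --- which Borat--Grant carry out --- is the continuous section over the entire contractible-fibre locus. (A small further slip: pairs such as $\bigl((0,a),(1,a)\bigr)$ with $0<a<1$ satisfy your defining conditions for $X_2$ but have empty trace space, so they are not in $\vec{X^2}$ at all; your description of $X_2$ implicitly assumes $\preceq$ coincides with the coordinatewise order, which fails on $\partial\Box_n$.)
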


\subsection{Invariance under directed homotopy equivalence}
Goubault, Farber and Sagnier investigate also the invariance of topological complexity under their notion of directed homotopy equivalence, cf the final remark in Section \ref{ss:defex}. It is quite easy to check that the following result \cite[Proposition 7]{GFS:18} also holds when using our notion of coherent $\alpha$ directed homotopy equivalence,  regardless of the flavour $\alpha$:

\begin{proposition}
Directed topological complexity is invariant under directed homotopy equivalence. 
\end{proposition}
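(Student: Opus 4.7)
The plan is to transport a section-cover witnessing $\overrightarrow{TC}(Y)=n$ backward along a coherent $\alpha$-directed homotopy equivalence $f:X\to Y$ to produce a section-cover of $\vec{X}^2$ with the same number of pieces, and to run the symmetric argument using the inverse $g$ to obtain the reverse inequality; equality then follows.

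First I would fix a coherent $\alpha$-directed homotopy equivalence $f:X\to Y$. By Remark \ref{rem:GFS}(2) we may, without altering the d-spaces involved, replace $f$ by a map whose partner $g:Y\to X$ satisfies that $g\circ f$ and $f\circ g$ are coherently $\alpha$-\emph{inessential} (rather than just rather inessential). In particular both compositions are coherently psp. Invoking the coherent analogue of Lemma \ref{lem:psp2} -- proved by the same 2-out-of-6 pattern, now at the level of fibre homotopy equivalences over $\vec{X}^2$ and $\vec{Y}^2$ -- I conclude that $f$ and $g$ are themselves coherently psp. This supplies continuous fibre homotopy inverses $F:(\vec{f^2})^*\vec{T}(Y)\to\vec{T}(X)$ over $\vec{X}^2$ and $G:(\vec{g^2})^*\vec{T}(X)\to\vec{T}(Y)$ over $\vec{Y}^2$, as in Definition \ref{def:cpsp}(1).

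Next I would prove $\overrightarrow{TC}(X)\le\overrightarrow{TC}(Y)$. Given $\vec{Y}^2=\bigsqcup_{i=1}^n Y_i$ together with a section $s_Y$ of $\vec{e}_Y$ whose restrictions $s_Y|_{Y_i}$ are continuous, the preimages $X_i:=(\vec{f^2})^{-1}(Y_i)$ are mutually disjoint and cover $\vec{X}^2$ (since $f$ is a d-map, $\vec{f^2}$ maps $\vec{X}^2$ into $\vec{Y}^2$). Define
\[ s_X(x_1,x_2):=F\bigl(x_1,x_2,s_Y(fx_1,fx_2)\bigr). \]
Since $s_Y(fx_1,fx_2)\in\vec{T}(Y)_{fx_1}^{fx_2}$ lies in the fibre of $(\vec{f^2})^*\vec{T}(Y)$ over $(x_1,x_2)$, and $F$ sits over $\mathrm{id}_{\vec{X}^2}$, the element $s_X(x_1,x_2)$ belongs to $\vec{T}(X)_{x_1}^{x_2}$, so $s_X$ is a set-theoretic section of $\vec{e}_X$; piecewise continuity $s_X|_{X_i}$ is immediate as a composition of the continuous maps $s_Y|_{Y_i}$, $\vec{f^2}$, and $F$. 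The reverse inequality is obtained by exchanging the roles of $(X,f,F)$ and $(Y,g,G)$.

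I expect the main obstacle to be the promotion of coherent psp from the compositions $g\circ f$ and $f\circ g$ to the individual maps $f$ and $g$ -- the coherent analogue of Lemma \ref{lem:psp2} that is not spelled out explicitly in the text. Once that technical step is settled, the remaining ingredients (disjointness of the preimages, the fibre identity that makes $s_X$ a section, and piecewise continuity) are formal consequences of the pullback construction and the continuous fibre homotopy inverse $F$.
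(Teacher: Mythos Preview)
Your argument is correct, but it takes a different route from the paper. The paper never promotes coherence from the composites $g\circ f$ and $f\circ g$ to the individual maps $f$ and $g$; instead it works directly with the fibre homotopy equivalence $\vec{T}(g\circ f):\vec{T}(X)\to(\vec{g^2}\circ\vec{f^2})^*\vec{T}(X)$ over $\vec{X}^2$, which is given immediately by coherent inessentiality of $g\circ f$. The section is then built as $\vec{T}(g)\circ s_Y\circ\vec{f^2}$, landing in $(\vec{g^2}\circ\vec{f^2})^*\vec{T}(X)$, and corrected by the fibre inverse of $\vec{T}(g\circ f)$. This sidesteps entirely the step you flag as the main obstacle.

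Your route via a coherent analogue of Lemma~\ref{lem:psp2} does work: over $\vec{X}^2$ one has the string
\[
\vec{T}(X)\xrightarrow{\vec{T}(f)}(\vec{f^2})^*\vec{T}(Y)\xrightarrow{(\vec{f^2})^*\vec{T}(g)}(\vec{g^2}\circ\vec{f^2})^*\vec{T}(X)\xrightarrow{(\vec{g^2}\circ\vec{f^2})^*\vec{T}(f)}(\vec{f^2}\circ\vec{g^2}\circ\vec{f^2})^*\vec{T}(Y),
\]
in which the first double composite is $\vec{T}(g\circ f)$ and the second is the pullback $(\vec{f^2})^*\vec{T}(f\circ g)$; both are fibre homotopy equivalences over $\vec{X}^2$ (the latter by pulling back the given coherence data along $\vec{f^2}$), and the 2-out-of-6 property for homotopy equivalences in $\mathbf{Top}/\vec{X}^2$ yields the fibre inverse $F$ you need. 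So your detour is sound, but it is a detour: the paper's argument uses only what the definition hands you and avoids having to justify this extra lemma.
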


\begin{proof}
The basic argument in the proof is essentially the one given in \cite{GFS:18}. We adapt it to our setting as follows: Assume $f: X\to Y$ and $g: Y \to X$ are d-maps such that $g\circ f$ and $f\circ g$ are coherently inessential, cf Definition \ref{def:dhet}. Then $\vec{T}(g\circ f): \vec{T}(X)\to (\vec{g^2}\circ\vec{f^2})^*\vec{T}(X)$ is a fibre homotopy equivalence over $\vec{X}^2$. Let $s: \vec{Y}^2\to\vec{T}(Y)$ denote a section of $\vec{e}_Y$, continuous on $n$ disjoint subspaces $Y_i\subset\vec{Y}^2$ covering $\vec{Y}^2$. Then the composite $\vec{T}(g)\circ s\circ \vec{f^2}: \vec{X}^2\to\vec{TX}$ is a map over $(\vec{g^2}\circ\vec{f^2}): \vec{X}^2\to\vec{X}^2$, continuous on the $n$ disjoint subspaces $X_i:= f^{-1}(Y_i)\subset\vec{X}^2$ covering $\vec{X}^2$. It pulls back to a section of $(\vec{g^2}\circ\vec{f^2})^*\vec{T}(X)$ over $\vec{X}^2$. Composition with a fibre inverse to $\vec{T}(g\circ f)$ yields a section of $\vec{e}_X: \vec{T}(X)\to\vec{X}^2$ the restrictions of which to the subsets
$X_i$ are continuous.

Hence $\overrightarrow{TC}(X)\le\overrightarrow{TC}(Y)$. A symmetric argument proves the reverse inequality. 
\end{proof}

\section{Directed homotopy equivalences and pair component categories.}\label{s:pcc}

\subsection{Extension categories and relatives}
It was the main aim of \cite{Raussen:18} (consult that paper for motivations and details) to organize essential information about the relations between spaces of directed paths (with varying end points) \emph{within a given d-space} in appropriate categories; to investigate, and if possible, to simplify and/or to compress these categories. Point of departure is the extension category $E\vec{\pi}_1(X)$ of the fundamental category \cite{Grandis:01,FGHMR:16} $\vec{\pi}_1(X)$ (objects: points in $X$; morphisms: homotopy classes of d-paths between source and target). $E\vec{\pi}_1(X)$ contains as objects the reachable pairs in $\vec{X}^2$. (Extension) morphisms have the form $(\sigma * ,*\tau )\in E\vec{\pi}_1(X)((x,y),(x',y'))=\vec{\pi}_1(X)_{x'}^x\times\vec{\pi}_1(X)_y^{y'}$ -- contravariant in the first coordinate and covariant in the second. To this category, further morphisms arising from endo-d-maps $f:X\to X$ may be added ``acting'' on those already present: 

An endo-d-map $f$ gives rise to a morphism from $(x,y)$ to $(fx,fy)$. All endo-d-maps give rise to the morphisms of a category $d(X)$ (under composition) on the object set $\vec{X^2}$. The ``mixed'' category $dE\vec{\pi}_1(X)$, cf \cite{Raussen:18}, has again $\vec{X^2}$ as set of objects; its morphisms are freely generated by those of  $E\vec{\pi}_1(X)$ and of $d(X)$ modulo the relations generated by the following two commutative diagrams 

\begin{equation}\label{eq:dE}
\xymatrixcolsep{5pc}\xymatrix{(fx,fy)\ar[r]^{((f\circ\sigma)*, *(f\circ\tau ))} & (fx',fy')\\
(x,y)\ar[u]_f\ar[r]^{(\sigma *, * \tau )} & (x',y')\ar[u]_f},
\end{equation}
for any endo d-map $f:X\to X, (x,y)\in\vec{X^2}, \sigma\in\vec{\pi}_1(X)_{x'}^x, \tau\in\vec{\pi}_1(X)_y^{y'}$; and

\begin{equation}\label{eq:flow}
\xymatrix{(fx, fy)\ar[r]^{*H(y)} & (fx, gy)\\
(x,y)\ar[u]^f\ar[r]^g & (gx,gy)\ar[u]_{H(x)*}}
\end{equation}
for any future d-homotopy (cf Definition \ref{def:d-h}) $H: X\times\vec{I}\to X$ from  $H_0=f$ to $H_1=g$, and for all $(x,y)\in \vec{X^2}$. Here $H(x)$ is the d-path arising by restricting $H$ to $x\in X$.

From now on, we will make use of general $\mc{F}$-equivalences, cf Section \ref{sss:F-eq}; at a first read just think of $\mc{F}=\mc{F}_{\infty}$ of consisting of the weak homotopy equivalences. The category $dE\vec{\pi}_1(X)$ comes equipped with a functor into any of the categories $\mc{C}_{\mc{F}}$ corresponding to the equivalence system $\mc{F}$: Pairs of points $(x,y)$ go to trace space $\vec{T}(X)_x^y$ resp.\ its $\mc{C}_{\mc{F}}$-invariant (eg homology); extensions and d-maps to the morphism induced by those on (the invariants of) these trace spaces. The diagrams associated to (\ref{eq:dE}) and to (\ref{eq:flow}) in $\mc{C}_{\mc{F}}$ do commute!

Instead of letting \emph{all} endo-d-maps act, we may restrict to adding \emph{only} the $\alpha\mc{F}$-\emph{inessential} ones, corresponding to the symmetries of the d-space $X$: First of all, they give rise to ``inessential'' subcategories $\Sigma_{\mc{F}}^{\alpha}(X)\subset d(X)$ restricting the morphisms in $d(X)$ to those arising from  $\alpha\mc{F}$ inessential d-maps. Combining with the extension category $E\vec{\pi}_1(X)$ -- as above -- yields a subcategory $\Sigma_{\mc{F}}^{\alpha}E\vec{\pi}_1(X)\subset dE\vec{\pi}_1(X),\; \alpha = +, -, 0$. Remark that inessential d-maps are mapped into \emph{iso}morphisms in $\mc{C}_{\mc{F}}$ under the functor mentioned above.

Inverting all symmetries given by inessential morphisms $f\in \Sigma_{\mc{F}}^{\alpha}(X)$ gives rise to localized categories 
$\Sigma_{\mc{F}}^{\alpha}E\vec{\pi}_1(X)[\Sigma_{\mc{F}}^{\alpha}(X)^{-1}]$ -- relating objects $(x,y)$ and $(fx,fy)$ resp.\ extensions $(\sigma *,*\tau): (x,y)\to (x',y')$ and $(f\sigma *,*f\tau): (fx,fy)\to (fx',fy)$ by an \emph{iso}morphism. Remark that also a \emph{rather} inessential morphism $h$ gives rise to isomorphisms in the localized category: If $g$ and $h\circ g$ are inessential morphisms, then $h=(h\circ g)\circ g^{-1}$ is a product of isomorphisms (with inverse $g\circ (h\circ g)^{-1}$). Note also that the functors from above into $\mc{C}_{\mc{F}}$ extend to the localized categories (eg to $\mathbf{Ho Top}$) since invertible morphisms go to isomorphisms. 

The path components of $\vec{X}^2$ with respect to isomorphisms in such a localized category (with respect to zig-zag paths induced by inessential morphisms) form the objects of the quotient \emph{pair component  categories} $\vec{\pi}_0(X; \alpha , \mc{F})_*^*$; cf.\ \cite{Raussen:18}. Remark that rather inessential morphisms in $\Sigma^{\alpha}_{\mc{F}}(X)$ become identities in this quotient category.

\subsection{Pair component categories as obstructions to directed homotopy equivalences}
A d-map $F:X\to Y$ induces a functor $E\vec{\pi}_1(F): E\vec{\pi}_1(X)\to E\vec{\pi}_1(Y)$: To $(x_1,x_2)\in\vec{X}^2$ we associate the pair $(Fx_1,Fx_2)$ and to an extension by $\sigma\in\vec{\pi}_1(X)((x_1,x_2),(x_1',x_2'))$ we associate the extension by $F\sigma\in\vec{\pi}_1(Y)((Fx_1,Fx_2),(Fx_1',Fx_2'))$. There is no obvious way to construct a functor relating endo-d-maps on $X$ to endo-d-maps on $Y$ on the basis of a \emph{single} d-map $F: X\to Y$, such. That option seems to arise if one considers a \emph{pair} of d-maps $F:X\to Y$ and $G:Y\to X$. We may then associate to an endo-d-map $f:X\to X$ the endo-d-map $F\circ f\circ G:Y\to Y$. But this construction is not functorial: To a composition $f_2\circ f_1$ of d-maps $f_1,f_2: X\to X$, we associate $F\circ f_2\circ f_1\circ G: Y\to Y$ and not $F\circ f_2\circ G\circ F\circ f_1\circ G$.

Now assume that d-spaces $X$ and $Y$ are $\alpha\mc{F}$-equivalent, ie there exists a d-map $F: X\to Y$ with ``$\alpha\mc{F}$ inverse'' $G: Y\to X$. By Lemma \ref{lem:insert} and Lemma \ref{lem:insert2}, if we start with a (rather) inessential map $f: X\to X$, then the resulting map $F\circ f\circ G: Y\to Y$ is again (rather) inessential. The two endo d-maps $f_2\circ f_1$ and $f_2\circ G\circ F\circ f_1$ on $X$ from above ``differ'' only by the inserted (rather) inessential map $G\circ F: X\to X$. 

\begin{proposition}
Let $F: X\to Y$ denote an $\alpha\mc{F}$ equivalence.
\begin{enumerate}
    \item The composite functor $\xymatrix{E\vec{\pi}_1(X)\ar[r]^{E\vec{\pi}_1(F)} & E\vec{\pi}_1(Y)\ar[r] & \Sigma_{\mc{F}}^{\alpha}E\vec{\pi}_1(Y)[\Sigma_{\mc{F}}^{\alpha}(Y)^{-1}]}$ is essentially onto. Every morphism in the localized category is conjugate (via isomorphisms) to a morphism in the image.
    \item The above functor yields a quotient isomorphism $\vec{\pi}_0(F): \vec{\pi}_0(X; \alpha, \mc{F})_*^*\to\vec{\pi}_0(Y; \alpha,\mc{F})_*^*$ on the level of pair component categories.
\end{enumerate}
\end{proposition}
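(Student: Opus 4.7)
The plan is to exploit the $\alpha\mc{F}$-inverse $G:Y\to X$ together with the compatibility relation (\ref{eq:dE}), using the observation after (\ref{eq:flow}) that rather $\alpha\mc{F}$-inessential endo-d-maps give rise to isomorphisms in the localized categories and, as stated at the end of Section \ref{s:pcc}, to identities in the quotient pair component categories.

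For (1), I first handle essential surjectivity on objects: given $(y_1,y_2)\in\vec{Y}^2$, the pair $(Gy_1,Gy_2)$ lies in $\vec{X}^2$ and maps under $E\vec{\pi}_1(F)$ to $(FGy_1,FGy_2)$, which is joined to $(y_1,y_2)$ by the isomorphism induced by the rather $\alpha\mc{F}$-inessential $FG:Y\to Y$. For the morphism assertion, I reduce (using that symmetry morphisms from $\Sigma_{\mc{F}}^{\alpha}(Y)$ are already invertible in the localized category) to treating a single extension $(\sigma *, *\tau):(y_1,y_2)\to (y_1',y_2')$. Definition \ref{def:ri} applied to the rather inessential $FG$ supplies an inessential $g:Y\to Y$ such that $h:=FG\circ g\in\Sigma_{\mc{F}}^{\alpha}(Y)$ is also inessential. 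Then relation (\ref{eq:dE}) applied to $h$ reads
\[h\circ(\sigma *, *\tau)=(h\sigma *, *h\tau)\circ h\]
in the localized category, exhibiting $(\sigma *, *\tau)$ as the conjugate by $h$ of
\[(h\sigma *, *h\tau)=(F(Gg\sigma)*,*F(Gg\tau))=E\vec{\pi}_1(F)(Gg\sigma *, *Gg\tau),\]
a morphism in the image of the composite functor.

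For (2), by symmetry $G$ is itself an $\alpha\mc{F}$-equivalence with ``inverse'' $F$, so the same construction produces a functor $\vec{\pi}_0(G):\vec{\pi}_0(Y;\alpha,\mc{F})_*^*\to\vec{\pi}_0(X;\alpha,\mc{F})_*^*$. Inspection of the actions on objects and on extension morphisms yields $\vec{\pi}_0(G)\circ\vec{\pi}_0(F)=\vec{\pi}_0(GF)$ and $\vec{\pi}_0(F)\circ\vec{\pi}_0(G)=\vec{\pi}_0(FG)$ as functors between the pair component categories. Since both $GF$ and $FG$ are rather $\alpha\mc{F}$-inessential, each of these composites equals the identity functor by the remark that rather inessential endo-d-maps act trivially on the pair component quotient. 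Hence $\vec{\pi}_0(F)$ is an isomorphism of categories.

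The main obstacle is that relation (\ref{eq:dE}) is formulated only for inessential endo-d-maps, not for rather inessential ones like $FG$. The workaround is to replace $FG$ by the inessential $h=FG\circ g$; the factorization $h\sigma=F(Gg\sigma)$ then simultaneously enables the use of (\ref{eq:dE}) applied to $h$ and keeps the resulting extension inside the image of $E\vec{\pi}_1(F)$.
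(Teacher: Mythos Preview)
Your proof is correct and follows essentially the same approach as the paper. The only notable variation is in part (1) for morphisms: the paper conjugates extension morphisms directly by the rather inessential $FG$ (invoking its earlier remark that rather inessentials become isomorphisms after localization) and handles inessential symmetry generators $f\in\Sigma_{\mc{F}}^{\alpha}(Y)$ by a separate diagram exhibiting them as conjugate to an identity; you instead replace $FG$ by the genuinely inessential $h=FG\circ g$ so that relation (\ref{eq:dE}) applies on the nose, and the factorization $h\sigma=F(Gg\sigma)$ then lands the conjugated extension squarely in the image of $E\vec{\pi}_1(F)$. Your reduction to a single extension is legitimate since (\ref{eq:dE}) lets one commute inessentials (and their formal inverses) past extensions, so every morphism in the localized category has the form $(\text{iso})\circ(\text{extension})$; part (2) is argued identically in both proofs.
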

 
In particular, if two d-spaces have non-isomorphic pair component categories $\vec{\pi}_0(-; \alpha , \mc{F})_*^*$, then there cannot exist an $\alpha\mc{F}$-equivalence between them. As Example \ref{ex:wedge2}(2) shows, the existence of an isomorphism of pair component categories is a necessary but \emph{not} a sufficient condition for the existence of an $\alpha\mc{F}$-equivalence.

\begin{proof}
\begin{enumerate}
\item Let $G: Y\to X$ denote an $\alpha\mc{F}$-inverse to $F$. An object $(y_1, y_2)\in\vec{Y}^2$ is connected -- by the isomorphism $FG$ -- to $(FGy_1, FGy_2)=F(Gy_1,Gy_2)\in\vec{Y}^2$ with $(Gy_1,Gy_2)\in\vec{X^2}$. Every covariant extension morphism $*\sigma : (y_1,y_2) \to (y_1,y_3)$ fits into the diagram
\[\xymatrix{(FGy_1,FGy_2)\ar[r]^{*FG\sigma} & (FGy_1,FGy_3)\\
(y_1,y_2)\ar[u]^{FG}\ar[r]^{*\sigma} & (y_1,y_3)\ar[u]_{FG}.}\]
Note that the top morphism $*FG\sigma=F(*G\sigma:(Gy_1,Gy_2)\to (Gy_1,Gy_3))$ is in the image of $E\vec{\pi}_1(F)$. Similarly for contravariant extensions. 

An inessential d-map $f: Y\to Y$ inducing $f:(y_1,y_2)\to (fy_1,fy_2)$ fits into the diagram
\[\xymatrix{(FGy_1,FGy_2)\ar[r]^{=} & (FGy_1,FGy_2)\\
(y_1,y_2)\ar[u]^{FG}\ar[r]^{f} & (fy_1,fy_2)\ar[u]_{FG\circ f^{-1}}}\]
with a morphism in the image of $F$ (of the identity on $(Gy_1,Gy_2)$) on top.
\item As mentioned before, an $\alpha\mc{F}$-equivalence $F: X\to Y$ does not yet induce a functor between localized categories. But inessential maps give rise to isomorphisms in those and hence to \emph{identities} in the pair component categories; hence $\vec{\pi}_0(F)$ becomes a functor. The remaining part follows from (1) above: The functors induced by $FG$ and by $GF$ on pair component categories are the identities.
\end{enumerate}
\end{proof}

\begin{example}
In his thesis \cite{Dubut:17}, Dubut defined an interesting $\Box$-space $D$ consisting of four 2-dimensional cubes and a glueing that can visualized as in Figure \ref{fig:Dub1}: 

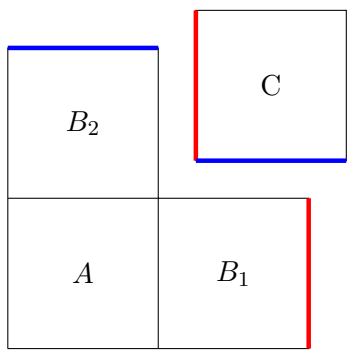
\begin{figure}[h]\label{fig:Dub1}
\begin{tikzpicture}
  \draw (0,0) -- (4,0) -- (4,2) -- (2,2) -- (2,4) -- (0,4) -- (0,0);
\draw (2.5,2.5) -- (4.5,2.5) -- (4.5,4.5) -- (2.5,4.5) -- (2.5,2.5);
\draw (0,2) -- (2,2);
\draw (2,0) -- (2,2);
\node at (1,1) {$A$};
\node at (1,3) {$B_2$};
\node at (3,1) {$B_1$};
\node at (3.5,3.5) {C};
\draw[line width=0.6mm, color=red] (4,0) -- (4,2);
\draw[line width=0.6mm, color=blue] (0,4) -- (2,4);
\draw[line width=0.6mm, color=red] (2.5,2.5) -- (2.5,4.5);
\draw[line width=0.6mm, color=blue] (2.5,2.5) -- (4.5,2.5);
\end{tikzpicture}
\caption{The cubical complex $D$}\label{fig:Dub1}
\end{figure}

It is quite easy to see (cf \cite[Section 1.2]{Raussen:18}) that all arising path spaces in $D$ are either empty, contractible, or they consist of two contractible components. This fact, and also all arising extension maps, are very reminiscent of the space $\partial\Box_2$, the boundary of a square, cf Example \ref{ex:sphere}. But these two spaces are \emph{not} homotopy equivalent: Their component categories are analyzed in \cite{Raussen:18}. The pair component category of $\partial\Box_2$ is easy to understand; it has nine objects; cf \cite[Section 4.3.1]{Raussen:18}. In contrast, the pair component category of $D$ is difficult to describe; it has far more objects; cf \cite[Proposition 5.13]{Raussen:18}. In particular, there cannot exist any directed homotopy equivalence connecting them. 

\end{example}

\end{document}